\documentclass[10 pt, reqno]{amsart}
\usepackage[utf8]{inputenc}
\usepackage{todonotes}
\usepackage{new-style}
\newcommand{\Z}{\mathbb{Z}}

\usepackage[backend=biber, sorting=nyt, maxnames=100,backref=true, giveninits=true, sortcites=true]{biblatex}
\usepackage{tikz}
\usetikzlibrary{decorations.pathreplacing,calc}

\newcommand{\dist}{\operatorname{dist}}

\title{Long Directed Cycles in Vertex-Transitive Digraphs}

\author{Bowen Li$^{\star}$}
\address{$^{\star}$Department of Mathematics, University of Illinois Urbana--Champaign}
\email{bowenl6@illinois.edu}
\thanks{Bowen Li is supported by NSF grant RTG DMS-1937241, UIUC Campus Research Board RB24012, and a University Block Grant Fellowship.}

\author{Abhishek Methuku$^{\star}$}
\email{methuku@illinois.edu}
\thanks{Abhishek Methuku is supported by the UIUC Campus Research Board Award RB25050 and Simons Foundation Award SFI-MPS-TSM-00025583.}

\subjclass[2020]{Primary 05C38; Secondary 05C20, 05C25, 05C45}
\keywords{Vertex-transitive digraph, directed cycle, circumference, Hamiltonian cycle}

\date{}

\begin{document}

\begin{abstract}
The search for Hamiltonian cycles in vertex-transitive graphs and digraphs is a classical problem at the interface of graph theory and group theory. In the undirected setting, this goes back to the well-known conjectures of Lov\'asz and Thomassen concerning Hamiltonian paths and cycles in connected vertex-transitive graphs. Dating back to Rankin's 1946 work, the directed analogue has an even longer history, linking the search for long cycles to classical group-rearrangement problems. 

Trotter and Erd\H{o}s showed in 1978 that connected vertex-transitive digraphs need not be Hamiltonian. In light of this result, Alspach asked in 1981 whether there exist connected vertex-transitive digraphs whose longest directed cycle misses arbitrarily many vertices. This question was only recently resolved by Buci\'c, Hendrey, Mohar, Steiner and Yepremyan, who constructed connected vertex-transitive digraphs on $n$ vertices whose longest directed cycle omits $(1-o(1))\log n$ vertices. They conjectured that the number of omitted vertices can grow linearly with $n$, remarking that it would already be interesting to improve their logarithmic lower bound to a polynomial bound. In this paper, we confirm their conjecture in a strong form by constructing infinitely many connected vertex-transitive digraphs on $n$ vertices whose longest directed cycle omits at least $n/12$ vertices.

In the same work, Buci\'c, Hendrey, Mohar, Steiner and Yepremyan also proved that every connected vertex-transitive digraph on $n$ vertices contains a directed cycle of length $\Omega(n^{1/3})$, giving the first lower bound for this problem that grows with $n$. We improve this to $\Omega(\sqrt n)$, matching the order of Babai's classical theorem from 1979 for undirected vertex-transitive graphs.

\end{abstract}

\maketitle

\section{Introduction}

Finding long paths and cycles under structural assumptions is a classical
direction in graph theory. A cycle containing every vertex is called a
\emph{Hamiltonian cycle}, and a graph containing one is said to be
\emph{Hamiltonian}. One of the most intriguing such structural assumptions
is symmetry. Lov\'asz conjectured in 1969 that every connected
vertex-transitive graph contains a Hamiltonian path~\cite{lovasz1970}, and
Thomassen later conjectured that every sufficiently large connected
vertex-transitive graph is Hamiltonian; see~\cite{babai1979}. These
conjectures have attracted considerable
attention~\cite{alspach1981,curran1996,pak2009,witte1984}, but remain widely open,
and most known results require additional assumptions.

On the other hand, Babai~\cite{babai1979} initiated a very general
direction of attack in 1979: instead of asking for a Hamiltonian cycle, one may ask
how long a cycle must exist in every connected vertex-transitive graph
without any further assumptions. He proved that every connected
vertex-transitive graph on $n$ vertices contains a cycle of length
$\Omega(\sqrt n)$. In recent years, several works have improved this result
and connected it to a number of other graph-theoretic
problems~\cite{devos2023,groenland2025,ma2025,norin2025,bucic2026towards}.
The current state-of-the-art bound is $\Omega(n^{2/3-o(1)})$, due to
Buci\'c, Christoph, Pokrovskiy and Steiner~\cite{bucic2026towards}.

The directed analogue of this problem asks the following fundamental question: how long a directed cycle must exist in an $n$-vertex connected vertex-transitive digraph? This directed variant possesses an even richer history. In the most important special case of Cayley digraphs, the directed setting is intrinsically more natural from a group-theoretic perspective, as it allows for arbitrary generating sets rather than strictly symmetric ones. In this Cayley digraph setting, the search for directed cycles translates directly into a natural group rearrangement problem. Indeed, the earliest investigations of this topic were carried out by Rankin in 1946~\cite{rankin1948} and, independently, by Rapaport-Strasser in 1959~\cite{rapaport1959}. Both began with precisely this group-theoretic question and then recast it in the language of Cayley digraphs. Both works attribute their motivation to campanology, alongside the knight's tour problem in the latter case. Strikingly, various instances of this exact problem had already been solved in practice more than a century prior to these mathematical formalizations (see~\cite[Section~4]{rankin1948} or~\cite{white1984} for historical details).

The directed analogue of Thomassen's conjecture was disproved in 1978 by Trotter and Erd\H{o}s~\cite{trottererdos1978}, who exhibited an infinite family of connected vertex-transitive digraphs without Hamiltonian cycles. Motivated by this result, Alspach asked in 1981 whether such digraphs must at least be ``nearly'' Hamiltonian~\cite{alspach1981}. To formalize this question, following~\cite{BHMSY}, we use the \emph{perimeter gap}---defined as the difference between the number of vertices and the length of the longest directed cycle in a digraph---as a measure of how far a digraph is from being Hamiltonian. In particular, Alspach (Question~7 in~\cite{alspach1981}) asked if, for any constant $C$, there exists a connected vertex-transitive digraph with a perimeter gap larger than $C$.

Recently, Buci\'c, Hendrey, Mohar, Steiner and Yepremyan answered Alspach's question in the affirmative by constructing connected vertex-transitive digraphs whose perimeter gap tends to infinity.

\begin{theorem}[Buci\'c, Hendrey, Mohar, Steiner and Yepremyan~\cite{BHMSY}]
\label{thm:BHMSY-gap}
For infinitely many natural numbers $n$, there exists a connected vertex-transitive digraph on $n$ vertices with perimeter gap at least $(1-o(1))\log n$.
\end{theorem}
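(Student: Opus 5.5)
The plan is a Cayley digraph $\Gamma=\mathrm{Cay}(G,\{x,y\})$ of a finite abelian group with two generators, in the spirit of Trotter and Erd\H{o}s, with parameters chosen so that missing few vertices is arithmetically impossible. Cayley digraphs are vertex-transitive, and connectivity holds as soon as $\langle x,y\rangle=G$. So everything reduces to showing that every directed cycle misses at least $(1-o(1))\log|G|$ vertices.

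\emph{Step 1: diagonal structure.} Let $C$ be a directed cycle and let $D$ be the set of vertices missed by $C$. Each $g\in V(C)$ leaves along either $x$ or $y$. The vertex $g+x$ has exactly the two in-neighbours $g$ and $g+x-y$. Hence if $g$ and $g+x-y$ both lie on $C$ and $g+x\notin D$, then exactly one of them enters $g+x$, so they use different generators in the appropriate sense. Propagating this along each coset of $H=\langle x-y\rangle$ gives the classical Rankin/Trotter--Erd\H{o}s rigidity: on a coset of $H$ containing no vertex of $D$ and no vertex whose successor lies in $D$, all vertices use the same generator. Such a coset is called a \emph{clean diagonal}. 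Each missed vertex spoils only $O(1)$ diagonals. Consequently, if $|D|=k$, then all but $O(k)$ of the $|G:H|$ diagonals are clean.

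\emph{Step 2: reduction to a permutation.} Record the choice of generator on each clean diagonal. Because a coset of $H$ is mapped to a single coset by either generator, and $x\equiv y \pmod H$, the cycle $C$ induces on $G$ a map that permutes $H$-cosets cyclically. On each coset it acts as a translation by $0$ or $x-y$ within $H$, except on the $O(k)$ spoiled diagonals. Requiring that $C$ be a single cycle through all of $V(C)$ then forces the total ``rotation'' after one trip around the quotient $G/H$ to generate the relevant cyclic group. In the clean case this is the condition $\gcd(m,s)=\gcd(n,t)=1$ type of Trotter--Erd\H{o}s, where $s$ counts the $x$-diagonals. With $k$ defects, the rotation amount can be adjusted only within a window whose size is governed by $k$.

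\emph{Step 3: choice of parameters (the main obstacle).} I would take $G=\mathbb Z_a\times\mathbb Z_b$ with $x=(1,0)$ and $y=(0,1)$. The sizes $a,b$ should share a large highly divisible common part, built from $L=\mathrm{lcm}(1,\dots,t)$ or the primorial up to $t$, so that every integer $s$ in a window of length about $t$ shares a prime factor with $a$ or $b-\ldots$. Then no admissible rotation exists unless the defects shift it by at least about $t$, which forces $k\gtrsim t$. By the prime number theorem $\log L=(1+o(1))t$. Choosing $a,b$ as $L$ times bounded factors keeps $\log n=(1+o(1))t$ up to lower-order terms. Here one must be careful that the number of diagonals is comparable to $L$ rather than $L^2$, so that $\log n$ really is $(1+o(1))t$ and not $2t$.

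The delicate part is making Step 2 quantitatively sharp. One must show that each missed vertex changes the effective rotation count by at most $1+o(1)$, with no $O(1)$ loss per defect. Otherwise only $c\log n$ follows. I would handle this by exact bookkeeping of how a single missed vertex splits one diagonal into two runs, each with a constant generator, and by verifying that the parity/coprimality obstruction degrades by exactly one unit per defect.
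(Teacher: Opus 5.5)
Your Step~3 does not produce the digraphs you want. If $a,b$ are $L$ times bounded factors, then $n=ab\asymp L^2$, so $\log n=(2+o(1))t$. The number of diagonals $g=\gcd(a,b)$ counts cosets of $H$, not vertices, so it does not change $n$. At best you would get $(\tfrac12-o(1))\log n$.

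More seriously, these digraphs are Hamiltonian. The Trotter--Erd\H{o}s criterion has to fail for \emph{every} split $d_1+d_2=g$, not just in a window of length $t$. Since $L\mid g$, for any prime $r\in(t,g)$ both $r$ and $g-r$ are coprime to all primes $\le t$. Choose such an $r$ that avoids the $O(1)$ extra prime factors of $a$ and the $O(1)$ classes $r\equiv g \pmod q$ for the extra primes $q>t$ of $b$; this is easy, since about $g/\log g$ primes are available. Then $\gcd(r,a)=\gcd(g-r,b)=1$, so the digraph is Hamiltonian and the perimeter gap is $0$.

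The idea you are missing is the global divisibility that the paper attributes to \cite{BHMSY} and also uses itself. Both generators increase $i+j \bmod g$ by one, so every directed cycle in $\mathrm{Cay}(\Z_a\times\Z_b,\{x,y\})$ has length divisible by $g$. Hence a non-Hamiltonian digraph of this form automatically has perimeter gap at least $g$.

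This makes the Step~2 bookkeeping unnecessary, and it shows that picture is wrong. A cycle cannot miss between $1$ and $g-1$ vertices at all. Also, on a diagonal containing a defect the return map is no longer a translation, so there is no ``rotation shifted by one unit per defect'' to track. The claim you flag as delicate is unproved and is not the right mechanism.

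With divisibility in hand, the task is purely arithmetic. To get $(1-o(1))\log n$ this way you need $g\ge (1-o(1))\log n$, together with $a,b$ for which the Trotter--Erd\H{o}s condition fails for all $d_1\in[0,g]$ and $\log(ab)=(1+o(1))g$. Your proposal supplies no such choice.

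For comparison, the paper does not reprove this statement; it is quoted from \cite{BHMSY}. It is superseded by Theorem~\ref{thm:perimeter-gap}. There, the layered digraph on $X\times\Z_m$ has every cycle length divisible by $m$ and has no Hamiltonian cycle, because the first-return permutation on a layer would have to be a $12$-cycle while being a product of even permutations. This gives perimeter gap $n/12\ge\log n$ for all large multiples $n$ of $12$.
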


They also proposed the following conjecture, remarking that it would already be interesting to improve their logarithmic lower bound to a polynomial one.

\begin{conjecture}[Conjecture~4.1 of~\cite{BHMSY}]
\label{conj:BHMSY-linear-gap}
There exists an $\varepsilon>0$ such that, for infinitely many values of $n$, there exists a connected vertex-transitive digraph on $n$ vertices whose perimeter gap is at least $\varepsilon n$.
\end{conjecture}

Our first result resolves Conjecture~\ref{conj:BHMSY-linear-gap} in a
strong form as follows.

\begin{theorem}\label{thm:perimeter-gap}
For infinitely many natural numbers $n$, there exists a connected vertex-transitive digraph $D$ on $n$ vertices with perimeter gap at least $n/12$.
\end{theorem}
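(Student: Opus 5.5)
The plan is to use a Cayley digraph of an abelian group with two generators, and to replace the Hamiltonicity test of Rankin and Trotter--Erd\H{o}s by a quantitative counting argument. Concretely, I would take $D=\mathrm{Cay}(\Z_a\times\Z_b,\{\alpha,\beta\})$ with $\alpha=(1,0)$ and $\beta=(0,1)$, that is, the Cartesian product of two directed cycles. I would choose $a$ and $b$ with a large common divisor and with a congruence obstruction, for instance $a=b=3k$ or a closely related family, fixed only after the counting is done. Connectivity and vertex-transitivity are automatic. The whole difficulty is bounding the longest directed cycle by $\tfrac{11}{12}n$.

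\textbf{Step 1 (arc-forcing).} The central tool is Rankin's ``arc-forcing'' observation, adapted to non-spanning cycles. Both $v$ and $v+\alpha-\beta$ have the vertex $v+\alpha$ as an out-neighbour. So if a cycle $C$ leaves $v$ by $\alpha$, then it cannot leave $v+\alpha-\beta$ by $\alpha$. Hence, if $v+\alpha-\beta\in V(C)$, it must leave by $\beta$. Symmetrically, both $v$ and $v-\alpha+\beta$ have $v+\beta$ as an out-neighbour, so a $\beta$-exit at $v$ forces an $\alpha$-exit at $v-\alpha+\beta$, again provided that vertex lies on $C$. Consequently, along each coset of $\langle\alpha-\beta\rangle$ (a ``diagonal''), the exit labels of vertices of $C$ are constant on maximal runs of covered vertices. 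The pattern can only change at uncovered vertices. I would turn this into a structural statement: a cycle with $u$ uncovered vertices is described by at most $u$ ``defects'' on the diagonals, and between defects it behaves like a Trotter--Erd\H{o}s Hamiltonian pattern.

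\textbf{Step 2 (global count).} Next I would count, for the cycle $C$, the numbers $x$ of $\alpha$-steps and $y$ of $\beta$-steps. Since $C$ is closed, $(x,y)$ must be a multiple of $(a,b)$ in the appropriate sense: $x\equiv 0 \pmod a$ and $y\equiv 0\pmod b$ after accounting for winding. The forcing from Step~1 ties $x$ and $y$ to the diagonal structure. Following Trotter--Erd\H{o}s, a cycle with no defects would require $s+t=d$ with $\gcd(s,a)=\gcd(t,b)=1$, where $d$ is the number of diagonals. I would choose the parameters so that this is impossible and, more strongly, so that any admissible pattern must abandon a constant fraction of each diagonal. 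The intended outcome is an inequality of the form $|V(C)|\le n-\tfrac{n}{12}$. Here the constant $12$ should come out of optimizing the residue obstruction, presumably a mod $3$ and mod $4$ interaction, against the length of a defect run.

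\textbf{Main obstacle.} I expect the main difficulty in Step~1 to Step~2: a single uncovered vertex breaks forcing on only one diagonal locally. So a priori few defects might suffice to reconcile the congruences, which would give only the additive gaps of Trotter--Erd\H{o}s and Theorem~\ref{thm:BHMSY-gap}. My first attempt to get a linear bound is to show that in a \emph{long} cycle, consecutive defects on a diagonal must be spaced in a way forced by the winding numbers. Then each ``unit'' of the congruence mismatch costs a number of uncovered vertices proportional to the diagonal length rather than $O(1)$. If the plain torus $C_a\square C_b$ does not give this, I would fall back on a modified Cayley digraph, for example a semidirect product or a lexicographic blow-up of the torus by a small group. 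The aim there is that every deviation from a rigid pattern propagates along a whole diagonal, so that the gap scales with $n$.
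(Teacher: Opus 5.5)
\paragraph{Verdict.} The proposal has a genuine gap, and it undermines the main line of attack rather than a single step.

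\paragraph{Why the torus cannot work.} The directed torus $C_a\square C_b$ never has linear perimeter gap. So no choice of $a,b$, and no refinement of the defect analysis in Steps~1--2, can give $|V(C)|\le\frac{11}{12}n$.

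To see this, take a closed walk that passes through the columns $\{i\}\times\Z_b$ in the order $i=0,1,\dots,a-1$. In column $i$ it makes $h_i\in\{0,\dots,b-1\}$ consecutive $\beta$-steps, and then one $\alpha$-step to the next column. This walk is closed exactly when $\sum_i h_i\equiv 0\pmod b$. It is then a directed cycle, because each column is met in a single vertical run of $h_i+1\le b$ vertices. Its length is $a+\sum_i h_i$.

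Choose $\sum_i h_i$ to be the largest multiple of $b$ not exceeding $a(b-1)$. This gives a cycle of length at least $ab-b+1$. By symmetry, the perimeter gap is at most $\min\{a,b\}-1<\sqrt n$. Your suggested family $a=b=3k$ is even Hamiltonian: take all $h_i=b-1$. So the ``main obstacle'' you flag is a true obstruction, not a technical hurdle.

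Separately, the forcing rule in Step~1 has the labels swapped. If $v$ exits by $\alpha$ and $v+\alpha-\beta\in V(C)$, then $v+\alpha-\beta$ must also exit by $\alpha$, since its $\beta$-arc ends at $v+\alpha$. Also, labels need not be constant on runs. A $\beta$ at $v$ followed by an $\alpha$ at $v+\alpha-\beta$ only forces the single vertex $v+\alpha$, on the next diagonal, to be uncovered.

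\paragraph{What is missing.} The fallback (semidirect products, blow-ups) is not yet an argument. It names neither an obstruction to Hamiltonicity nor a mechanism that makes deviations cost $\Omega(n)$ vertices. The missing idea is to separate two ingredients.

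First, use a cyclically layered vertex-transitive digraph whose layers have \emph{constant} size. Then every cycle length is divisible by the number of layers $m=\Theta(n)$, so any non-Hamiltonian cycle automatically misses at least $m$ vertices, and no defect analysis is needed. In the torus the layers are the $\gcd(a,b)\le\sqrt n$ diagonals, which is why divisibility there can never give more than $\sqrt n$.

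Second, rule out Hamiltonicity by a combinatorial obstruction rather than an arithmetic one. The paper takes $V(D)=X\times\Z_m$, where $X$ is the set of $12$ ordered pairs of distinct elements of $[4]$, with arcs $(p,q,t)\to(q,r,t+1)$ for $r\notin\{p,q\}$. A Hamiltonian cycle would induce bijections between consecutive layers. Each has the form $R\circ F$, with $F$ a product of four $3$-cycles and $R$ a product of six transpositions, so each is even. Their composition over one full turn must be a $12$-cycle, which is odd. This contradiction gives circumference at most $11m$, and hence perimeter gap at least $m=n/12$.
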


Thus, the perimeter gap need not merely tend to infinity: it can be linear in the number of vertices.

To prove Theorem~\ref{thm:perimeter-gap}, we introduce a combinatorial parity obstruction based on permutations to exclude Hamiltonicity, in contrast to the number-theoretic obstruction used in~\cite{BHMSY}, which relies on Cartesian products of directed cycles and arithmetic constraints on cycle lengths. 
Roughly speaking, our construction is a \emph{cyclically layered} directed graph $D$ whose vertex set is partitioned into layers, each identified with the same $12$-element set $X$, with edges running only from one layer to the next in a fixed cyclic order. Crucially, $D$ is designed so that any hypothetical Hamiltonian cycle induces a bijection between consecutive layers that acts as an \emph{even permutation} on $X$. By composing these layer-to-layer bijections along one full traversal of the graph, the induced first-return permutation on a fixed layer must also be even. However, for the directed cycle to be Hamiltonian, this induced permutation must act transitively on the $12$ elements of the layer, meaning it must be a cyclic permutation of length $12$. Since any cyclic permutation of even length is an odd permutation, this yields a parity contradiction. A detailed overview of this argument is provided in Section~\ref{proofoverview:improvedupperbound}.

Having established that connected vertex-transitive digraphs can be far
from Hamiltonian, the complementary question becomes even more natural:
how long a directed cycle can one always guarantee? Before the work of
Buci\'c, Hendrey, Mohar, Steiner and Yepremyan~\cite{BHMSY}, it was not even known
whether one can guarantee a directed cycle whose length grows with $n$.
They proved the first such bound, establishing a directed analogue of
Babai's well-known theorem.

\begin{theorem}[Buci\'c, Hendrey, Mohar, Steiner and Yepremyan~\cite{BHMSY}]
\label{thm:BHMSY-cycle}
Every connected vertex-transitive digraph $D$ on $n\ge 2$ vertices contains
a directed cycle of length $\Omega(n^{1/3})$.
\end{theorem}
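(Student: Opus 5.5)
A natural route to Theorem~\ref{thm:BHMSY-cycle} (which the paper later sharpens to $\Omega(\sqrt n)$) is to adapt Babai's argument from the undirected setting. Write $d$ for the common out-degree of $D$; since $D$ is vertex-transitive, every in-degree is also $d$. Recall that a connected vertex-transitive digraph is strongly connected: each weak component that is not strong contains a terminal strong component, and vertex-transitivity forbids this. The proof then splits according to the size of $d$.

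\emph{Case 1: $d$ is large.} I would use the greedy walk. Start anywhere and always step to an out-neighbour not yet visited. Since out-degree is at least $d$, this produces a directed path $v_0v_1\cdots v_d$ in which every out-neighbour of the endpoint lies on the path. If $v_d\to v_i$ is such an edge, the path closes into a directed cycle of length $d-i+1$. To make this cycle long, I would take a longest directed path $P=v_0\cdots v_\ell$. All out-neighbours of $v_\ell$ lie on $P$, so at least $d$ of them do, and the earliest one $v_i$ satisfies $i\le \ell-d+1$. This yields a cycle of length at least $d$. So a cycle of length $\Omega(d)$ always exists.

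\emph{Case 2: $d$ is small.} Strong connectivity together with bounded degree forces large diameter: balls of radius $r$ have size at most $d^{r}$ roughly. The aim is to combine a long shortest path with the symmetry. Take a shortest directed path $P$ from $u$ to $v$ of length $\ell\ge \log_d n$. Apply an automorphism $\phi$ sending $v$ to $u$. Then $P$ followed by $\phi(P)$ is a walk, and iterating gives a closed walk. The difficulty is that this walk may self-intersect early.

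\emph{Main obstacle.} I expect the key step to be a Babai-type intersection lemma: in a vertex-transitive digraph, any two longest directed cycles (or suitably chosen long paths) must share a vertex. Babai's undirected proof uses counting. If the longest cycle $C$ has length $k$, the translates of $C$ under $\mathrm{Aut}(D)$ cover each vertex equally often, with a fraction $k/n$ of translates passing through any given vertex. If two disjoint translates could be linked in both directions by short paths, they would merge into a longer cycle. In the directed setting, linking requires paths in both directions, and this is exactly where strong connectivity and the diameter bound from Case 2 enter.

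I would try the following quantitative version. Suppose the longest cycle has length $k$. By double counting automorphism images, find a vertex lying on many translates. Then use the two-directional linkages, whose lengths are at most the diameter, to splice translates whenever they are disjoint. Balancing this against $n\le$ (number of translates needed to cover) $\times$ $k$, with the cost of splicing, should give $k^3\gtrsim n$. This matches the exponent $1/3$ in the theorem, and the loss relative to $\sqrt n$ comes from having to pay for linking in both directions.
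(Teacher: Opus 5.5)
Your Case 1 is fine: the last vertex of a longest directed path has all its out-neighbours on the path, so there is a cycle longer than the out-degree. All the difficulty is in Case 2, and there the proposal has a genuine gap.

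The step you flag as the main obstacle is a Babai-type lemma that longest directed cycles, or their translates, must meet, proved by splicing disjoint translates along linking paths. This is exactly the step that does not transfer to digraphs. In the undirected argument, two disjoint cycles joined by two disjoint paths merge into a longer cycle because on each cycle you may take the longer of the two arcs between the attachment points. With orientations there is no choice. If $P$ leaves $C_1$ at $p$ and $P'$ returns to $C_1$ at $p'$, the spliced cycle must use the arc of $C_1$ from $p'$ to $p$, which may be arbitrarily short, and likewise on $C_2$. So linking ``in both directions'', however short the links, need not give anything longer than about $|P|+|P'|$.

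The paper stresses this point: the intersection property of longest cycles fails for digraphs, since strongly $2$-connected $2$-regular digraphs can have arbitrarily many disjoint longest cycles. Any such lemma would therefore need vertex-transitivity in an essential new way, and your sketch does not supply one. As a result, the balancing meant to give $k^3=\Omega(n)$ has no mechanism behind it; it is read off from the target exponent. Separately, splitting by out-degree leaves the regime of small degree and logarithmic diameter untouched, and there the bound ``diameter at least $\log_d n$'' is useless.

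The argument the paper relies on (from BHMSY, and in stronger form in Section~\ref{sec:weightedcyclemachinery}) splits instead on the directed diameter. Call it $\delta$ here to avoid a clash with your out-degree.
\begin{itemize}
\item \emph{Small $\delta$.} Vertex-transitivity gives weak expansion with ratio of order $1/\delta$, and a directed DFS argument extracts a cycle of length at least $n/(9\delta)$ (Lemma~\ref{lem:small-diameter-bound}).
\item \emph{Large $\delta$.} No intersection lemma is used. One passes to the cycle graph $C(D)$, which is nearly transitive. Any induced cycle $F_1F_2\cdots F_tF_1$ of $C(D)$ yields a directed cycle $Z$ of $D$ through distinct vertices of every $F_i$: follow $F_i$ from $F_{i-1}\cap F_i$ to its first vertex in $F_{i+1}$. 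Hence $\ell\ge |Z|\ge t$. Moreover, a nearly transitive graph of large diameter contains an induced cycle of length comparable to that diameter. A path through $k$ cycles of $C(D)$ gives a walk of length at most $k\ell$ in $D$, so this yields $\delta=O(\ell^2)$.
\end{itemize}
Combining the two cases gives $\ell=\Omega(\max\{n/\delta,\sqrt{\delta}\})=\Omega(n^{1/3})$. The paper sharpens the second bound to $\ell\ge \delta/300$ (Lemma~\ref{lem:diameter-bound}), which gives $\Omega(\sqrt n)$ and hence the statement.

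The ideas your proposal is missing are the diameter dichotomy with the expansion/DFS bound, and the cycle graph as a substitute for the unavailable intersection lemma.
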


A fundamental obstacle in translating bounds in the undirected setting to the directed setting is the stark disparity between the maximum lengths of paths and cycles. In undirected graphs, many results guaranteeing the existence of long cycles rely heavily on the structural property that in a $2$-connected graph, any two longest cycles must intersect. However, as noted in~\cite{BHMSY}, this intersection property fails entirely for digraphs. Indeed, there exist families of strongly $2$-connected, $2$-regular digraphs that contain arbitrarily many vertex-disjoint longest directed cycles---which can be restricted to a constant length, such as four---even as the maximum length of a directed path in these same graphs grows unboundedly. 

Consequently, one cannot infer the existence of long directed cycles merely from strong connectivity and long directed paths. Indeed, while Buci\'c, Hendrey, Mohar, Steiner and Yepremyan successfully established an $\Omega(\sqrt n)$ lower bound for directed paths, extracting long directed cycles demanded completely different machinery. By reducing the problem back to the undirected setting via an auxiliary \emph{cycle graph}, they overcame this obstacle, though this reduction ultimately yielded a weaker lower bound of $\Omega(n^{1/3})$ for directed cycles.

Our second result overcomes this barrier, establishing an $\Omega(\sqrt n)$ lower bound for directed cycles that matches Babai's classical bound in the undirected setting. In particular, it shows that the best lower bound known for directed paths extends to directed cycles as well.

\begin{theorem}\label{thm:sqrt-cycle}
Every connected vertex-transitive digraph $D$ on $n\ge 2$ vertices contains
a directed cycle of length $\Omega(\sqrt n)$.
\end{theorem}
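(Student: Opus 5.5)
Our plan is to adapt Babai's argument to the directed setting, replacing his use of intersecting longest cycles by an averaging argument over the automorphism group. Since $D$ is connected and vertex-transitive, it is strongly connected: the out-degree equals the in-degree at every vertex, so every weakly connected component is strongly connected. Let $C$ be a longest directed cycle, of length $L$. Suppose for contradiction that $L \le c\sqrt n$ for a small constant $c$. We will show this forces a longer cycle.

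The first step is to obtain many pairwise vertex-disjoint copies of $C$. Let $\Gamma=\mathrm{Aut}(D)$, which acts transitively. Every image $\gamma C$ is again a longest cycle. For a fixed vertex $v$, the proportion of $\gamma\in\Gamma$ with $v\in\gamma C$ is exactly $L/n$. Consequently, for two fixed cycles $C_1,C_2$ of length $L$, a uniformly random image $\gamma C_2$ meets $C_1$ with probability at most $L^2/n \le c^2$. We therefore select copies of $C$ greedily, or randomly, so that they are pairwise disjoint and cover most of a neighbourhood of a shortest path.

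The second step, which is the core of the argument, is a merging lemma. Let $C_1,C_2$ be vertex-disjoint directed cycles. Suppose $P$ is a directed path from $C_1$ to $C_2$ and $Q$ is a directed path from $C_2$ back to $C_1$, both internally disjoint from $C_1\cup C_2$ and from each other. Then we can form a cycle that uses $P$, most of $C_2$, $Q$, and most of $C_1$. Its length is larger than $L$ unless the landing and departure points are badly placed along the cycles.

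To guarantee good placement, we use transitivity again. Take a shortest directed path $v_0v_1\dots v_k$ from a vertex of $C$ to a vertex of $\gamma C$, and apply automorphisms that move $C$ along this path. Consecutive translates $\gamma_i C$ with $v_i\in\gamma_iC$ and $v_{i+1}\in\gamma_{i+1}C$ are then joined by single edges in both directions, the reverse direction coming from strong connectivity plus a short path. If two consecutive translates are disjoint, we splice them together using an edge $v_iv_{i+1}$ and a return path. If they intersect, their union contains a cycle whose length is at least $L$ plus the length of a "detour" segment. In either case the splice yields a cycle longer than $L$, provided the return path has length less than about $L$.

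The main obstacle is controlling the return paths. In a digraph the reverse distance can be large, so a longest cycle need not meet its translates nicely; this is exactly where Buci\'c et al.\ lost a factor. Our first attempt at this obstacle is a Babai-style counting argument. For each vertex $u$, let $S(u)$ be the set of vertices lying on some longest cycle through a fixed orbit representative. We would show that if $L<c\sqrt n$, then some two images $\gamma C$, $\gamma' C$ share a vertex and are "crossing". Concretely, there would be two vertices $x,y$ on both cycles whose orders along the two cycles are inconsistent. The union of the two cycles would then contain a cycle of length more than $L$.

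To find such a crossing, we would count pairs $(\gamma,\gamma')$ of images that meet. There are about $n^2\cdot L^2/n$ such incidences, and we would argue that non-crossing intersections force the cycles to share long common arcs. Sharing long common arcs, combined with connectivity, would then give either an extension of the cycle or the conclusion $L\ge \sqrt n$. Getting the order-inconsistency argument to work for directed cycles is the delicate part.
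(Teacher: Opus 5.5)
Your proposal has a genuine gap. The step that would actually produce a cycle longer than $L$ is never carried out, and each mechanism you suggest for it fails in digraphs.

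Splicing two disjoint translates requires a return path from $\gamma_{i+1}C$ to $\gamma_iC$ of length less than about $L$, with favourably placed endpoints. Strong connectivity only bounds that return path by the directed diameter $d$, and your argument never relates $d$ to $L$.

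The intersecting case is false as stated. If two directed cycles of length $L$ share exactly one vertex, their union contains no other directed cycle, since any cycle using arcs of both would pass through the common vertex twice.

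The crossing idea fails as well. Take $C_1=x\xrightarrow{A}y\xrightarrow{B}z\xrightarrow{C}x$ and $C_2=x\xrightarrow{A'}z\xrightarrow{B'}y\xrightarrow{C'}x$ with internally disjoint segments. The only other cycles in $C_1\cup C_2$ are formed by $A,C'$, by $B,B'$, and by $C,A'$. Their lengths sum to $2L$, and all three can equal $2L/3$.

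This is exactly the obstruction the paper emphasizes: in digraphs, longest cycles need not meet, and meeting cycles need not merge into longer ones. So there is no directed analogue of Babai's intersection mechanism in this form. Your first step, the $L^2/n$ averaging, is correct, but it feeds into nothing.

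What is missing is a bound on $d$ in terms of $\ell$, together with a separate argument for small $d$. The paper splits on $d$.
\begin{itemize}
\item For small $d$, weak expansion of vertex-transitive digraphs and a DFS argument from Buci\'c et al.\ give $\ell\ge n/(9d)$.
\item For large $d$, it proves $\ell\ge d/300$ via the cycle graph $C(D)$, with each cycle weighted by its length.
\end{itemize}
The large-$d$ argument has three parts.
\begin{itemize}
\item A path in $C(D)$ yields a walk in $D$ no longer than the path's weight, so the weighted diameter $\Gamma_w$ is at least $d$.
\item The cycles along an induced cycle of $C(D)$ can be threaded into one directed cycle $Z$ meeting all of them. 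This $Z$ is a shortcut vertex of weight at most $\ell$, so every weighted-geodesic subpath of an induced cycle weighs at most $3\ell$.
\item $w$-near transitivity, combined with a maximum-weight induced path argument, produces an induced cycle whenever $\Gamma_w\ge 300\ell$. This cycle carries a weighted geodesic of weight at least $\Gamma_w/2-100\ell>3\ell$, a contradiction.
\end{itemize}
Hence $\ell\ge\max\{n/(9d),d/300\}=\Omega(\sqrt n)$. The return-path problem you identified is precisely what this dependence on $d$ absorbs. Without first establishing something like $d=O(\ell)$, the splicing step cannot be made to work.
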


Together, Theorems~\ref{thm:perimeter-gap} and~\ref{thm:sqrt-cycle} show that vertex-transitive digraphs can have linear perimeter gap, while every connected vertex-transitive digraph contains a directed cycle of length $\Omega(\sqrt n)$.

Our proof of Theorem~\ref{thm:sqrt-cycle} introduces a weighted version of the cycle graph from~\cite{BHMSY} to eliminate the main loss in their argument. Their argument naturally splits into two regimes based on the directed diameter of $D$. When the diameter is small, weak expansion properties guarantee the existence of a long directed cycle. When the diameter is large, they reduce the directed problem to an undirected one by analyzing the auxiliary cycle graph, whose vertices represent the directed cycles of $D$ and whose edges denote intersections. 

In their unweighted framework, translating paths from the cycle graph back to walks in $D$ introduces a multiplicative factor equal to the circumference, enforcing a quadratic relation between the directed diameter and the circumference of~$D$. As a first step toward eliminating this loss, we weight each vertex in the cycle graph by its cycle length, ensuring that the weighted diameter of the cycle graph upper-bounds the directed diameter of $D$. While this introduces a new challenge---as weighted distances can vastly exceed ordinary graph distances---our key insight is that the cycle graph inherently provides ``weighted shortcuts'' that strictly limit the weight of any geodesic subpath on an induced cycle. On the other hand, a careful weighted adaptation of an elegant strategy from~\cite{BHMSY} allows us to trap a heavy weighted-geodesic subpath inside an induced cycle of the cycle graph, with weight comparable to the weighted diameter. Combining this structural trap with our weighted shortcut insight demonstrates that the weighted diameter of the cycle graph is bounded by a constant multiple of the circumference of $D$. This establishes a linear relation between the directed diameter and the circumference of $D$ that improves the $\Omega(n^{1/3})$ bound of \cite{BHMSY} to $\Omega(\sqrt n)$. We provide a detailed overview of this in Section~\ref{proofoverview:improvedlowerbound}.

\subsection{Structure of the paper}
The rest of the paper is organized as follows. In Section~\ref{proofoverview}, we give an overview of the main ideas behind the proofs of Theorems~\ref{thm:perimeter-gap} and~\ref{thm:sqrt-cycle}. In Section~\ref{sec:weightedcyclemachinery}, we develop the weighted cycle graph machinery, establish a linear relation between the directed diameter and the circumference of a connected vertex-transitive digraph, and prove Theorem~\ref{thm:sqrt-cycle}. Finally, in Section~\ref{sec:perimetergap}, we present our layered construction proving Theorem~\ref{thm:perimeter-gap}.

\subsection{Notation}
Throughout the paper, all graphs and digraphs are finite unless
explicitly stated otherwise. A digraph $D$ is \emph{vertex-transitive} if
for every pair of vertices $u,v\in V(D)$ there exists an automorphism of
$D$ mapping $u$ to $v$.

Given a digraph $D$, we denote its vertex set by $V(D)$ and its arc set
by $A(D)$. For vertices $u,v\in V(D)$, we write $d(u,v)$ for the
directed distance from $u$ to $v$, namely the length of a shortest
directed path from $u$ to $v$, setting $d(u,v)=\infty$ if no such path
exists. The directed diameter of $D$ is
$
\max_{u,v\in V(D)} d(u,v).
$
A digraph is \emph{strongly connected} if its directed diameter is
finite. It is \emph{connected} if its underlying undirected graph is
connected. For vertex-transitive digraphs these notions coincide. The
\emph{circumference} of a digraph $D$, denoted by $\ell$, is the length
of a longest directed cycle in $D$.

For an undirected graph $G$ and a set $S\subseteq V(G)$, we write $N(S)$ for the set of all vertices in $V(G)\setminus S$ having a neighbor in $S$. For convenience, throughout the paper we often identify a subgraph of $G$ with its vertex set. Thus, if $P$ is a path in $G$, we write $N(P)$ instead of $N(V(P))$, and $P \cap S$ for the set of vertices of $P$ that lie in $S$.

\section{Proof Overview}
\label{proofoverview}

\subsection{Proof sketch for Theorem~\ref{thm:perimeter-gap}}
\label{proofoverview:improvedupperbound}

The approach of \cite{BHMSY} guarantees a logarithmic perimeter gap by combining a global divisibility condition on all directed cycle lengths with an obstruction to Hamiltonicity. Specifically, they employ a number-theoretic obstruction---relying on Cartesian products and arithmetic constraints---to forbid a Hamiltonian cycle, which forces the circumference to drop due to the divisibility requirement. The key novelty of our construction is the use of a \emph{combinatorial parity obstruction}, encoded in the local transitions between layers, instead of a number-theoretic obstruction.
More precisely, for a positive integer
$m$ we construct a cyclic layered digraph $D$ on
$$
X\times \mathbb Z_m,
\qquad
X=\{(a,b):a,b\in[4],\ a\neq b\}.
$$
The edges always move from layer $t$ to layer $t+1$, and have the form
$$
(a,b,t)\to (b,c,t+1),
\qquad c\notin\{a,b\}.
$$ 
Thus every directed cycle has length divisible by $m$. The main point is
to show that a cycle of length $12m$ cannot exist. If there were such a
Hamiltonian cycle, then the edges it uses from one layer to the next would
induce a bijection of the $12$-element set $X$. Each of these local
bijections has the same parity: in fact, a simple check shows that it is
always an even permutation of $X$. Now follow the alleged Hamiltonian cycle
for $m$ steps starting from a fixed layer. Since the layer coordinate has
returned to where it started, these $m$ local bijections compose to a
permutation of that same copy of $X$, which must still be even. On the
other hand, because the Hamiltonian cycle is a single cycle through all
$12m$ vertices, its successive returns to the fixed layer must pass
through all $12$ vertices of that layer in one orbit. The resulting
first-return permutation is therefore a $12$-cycle, which is odd. This
parity contradiction rules out Hamiltonian cycles. Because every directed cycle in $D$ has length divisible by $m$, the longest directed cycle can have length at most $11m$. Hence, the perimeter gap is at least
$
m=|V(D)|/12.
$

\subsection{Proof sketch for Theorem~\ref{thm:sqrt-cycle}}
\label{proofoverview:improvedlowerbound}

The proof naturally splits into two regimes according to the directed diameter $d$ of the digraph $D$. When $d$ is small, the expansion properties of vertex-transitive digraphs, established in~\cite{BHMSY}, guarantee a directed cycle of length
$
\ell\ge \frac{n}{9d}.
$
Thus, the main challenge is to obtain a strong lower bound on $\ell$ when the directed diameter is large.

Following~\cite{BHMSY}, we analyze the auxiliary cycle graph $C(D)$, whose vertices represent the directed cycles of $D$, with adjacency corresponding to nonempty intersection. One of our key ideas is to equip the cycle graph $C(D)$ with a weighted metric by assigning each vertex $F$ the weight
$
w(F)=|F|,
$
equal to the length of the corresponding directed cycle in $D$.

To see how this weighting helps overcome the main source of loss in the argument of~\cite{BHMSY}, recall that their analysis uses the usual graph metric on the cycle graph, in which distance is measured by the number of edges in a shortest path. Consequently, a path of length $k$ in $C(D)$ yields a directed walk in $D$ obtained by traversing $k$ directed cycles, each of which may have length as large as the circumference $\ell$. Thus, the resulting walk may have length as large as $k\ell$. Intuitively, this estimate is only sharp if every cycle encountered has length close to $\ell$, but the unweighted metric cannot distinguish this from the much more typical situation where many of the traversed cycles are considerably shorter. This unavoidable loss introduces an extra factor of $\ell$, yielding the square-root relation $\ell \ge \sqrt d$ (equivalently, $\ell \ge d/\ell$) established in~\cite{BHMSY}. In this paper, we improve this bound to a linear relation, proving that $\ell \ge d/300$.

By contrast, the weighted metric assigns to each vertex of the cycle graph exactly the cost incurred by traversing the corresponding directed cycle in $D$. Consequently, the weighted length of a path in the cycle graph upper-bounds the length of the corresponding directed walk in $D$, implying that the weighted diameter of the cycle graph is at least the directed diameter of $D$.

At first sight, however, this merely replaces one difficulty by another: weighted distances can be substantially larger than ordinary graph distances, so the weighted diameter itself could be much larger than the circumference. The heart of our proof is to show that this cannot happen.

Our first ingredient establishes a weighted shortcut property for induced cycles in the cycle graph. Given an induced cycle $H$ in $C(D)$, as in \cite{BHMSY}, we construct a directed cycle $Z$ in $D$ that intersects every directed cycle represented by the vertices of $H$. Because the length of $Z$ cannot exceed the circumference $\ell$, it corresponds to a vertex in $C(D)$ of weight at most $\ell$ that is adjacent to every vertex of $H$. Crucially, this central vertex acts as a low-weight \emph{shortcut}: the path connecting any two vertices on $H$ via $Z$ has a total weight of at most $3\ell$. Consequently, any weighted-geodesic subpath along an induced cycle is bounded by this shortcut, limiting its total weight to $3\ell$.

Our second ingredient adapts an elegant structural approach of~\cite{BHMSY} through an extremal argument tailored to the weighted metric. More precisely, we show that if the weighted diameter is large, the cycle graph must contain an induced cycle featuring a heavy weighted-geodesic subpath. To achieve this, we consider an induced path $P$ of \emph{maximum weight} that terminates in a heavy weighted-geodesic segment $Q$. By exploiting the near-transitivity of the cycle graph, we map another long weighted geodesic so that it extends this terminal segment $Q$. If the shifted geodesic avoided the earlier vertices of $P$, we could concatenate it with $P$ to obtain an even heavier induced path, contradicting the maximality of $P$. Thus, the shifted geodesic must return to $P$, either meeting it or becoming adjacent to it, and this forced return produces an induced cycle capturing a significant fraction of the weighted diameter.

The weighted setting, however, introduces a structural obstacle. The approach of \cite{BHMSY} relies on the fact that the intersection regions arising from rerouting the path incur only a bounded loss in ordinary graph distance. In our framework, a single vertex in such an intersection region can have weight as large as $\ell$, potentially removing a substantial part of the heavy geodesic subpath we are trying to extract. We overcome this by carefully tracking the weights along the shifted paths and proving that the total weight lost during the rerouting step is bounded by an absolute constant multiple of $\ell$. This ensures that the resulting induced cycle successfully retains a weighted geodesic whose weight is a constant fraction of the total weighted diameter. Together with the weighted shortcut property we have established, this implies a contradiction whenever the weighted diameter of the cycle graph is greater than $300\ell$. Consequently, although weighted distances are potentially much larger than ordinary graph distances, the weighted diameter remains bounded by a constant multiple of the circumference.

Since the weighted diameter of the cycle graph $C(D)$ lies between the directed diameter $d$ of $D$ and $300\ell$, we obtain the desired linear relation $\ell \ge d/300$. Combining this with the aforementioned bound $\ell \ge n/(9d)$ from~\cite{BHMSY} yields $\ell=\Omega(\sqrt n)$, completing the proof.

\section{Weighted Cycle Graphs and a Linear Diameter--Circumference Relation}
\label{sec:weightedcyclemachinery}

Throughout this section, $D$ denotes a connected vertex-transitive
digraph on $n$ vertices, with directed diameter $d$ and circumference
$\ell$. The following lemma follows immediately from Lemmas 2.2 and 2.4 of~\cite{BHMSY}, where the authors establish weak vertex expansion that is inversely proportional to the directed diameter and extract the desired cycle via a directed depth-first search (DFS) argument.

\begin{lemma}\label{lem:small-diameter-bound}
Every connected vertex-transitive digraph $D$ on $n$ vertices with
directed diameter $d$ and circumference $\ell$ satisfies
$$
\ell \ge \frac{n}{9d}.
$$
\end{lemma}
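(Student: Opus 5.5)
The plan is to combine two ingredients: a weak vertex expansion statement for vertex-transitive digraphs, and a depth-first search argument that turns expansion into a long directed cycle. This is the route of Lemmas 2.2 and 2.4 of~\cite{BHMSY}.

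\textbf{Expansion.} Let $S\subseteq V(D)$ with $|S|\le n/2$, and let $N^+(S)$ denote the set of out-neighbours of $S$ lying outside $S$. I would show
$$
|N^+(S)| \ge c\,\frac{|S|}{d}
$$
for an absolute constant $c>0$ (for instance $c=1/2$), and likewise for in-neighbourhoods. The standard way to do this is by double counting with automorphisms. Fix a vertex $v$ and a shortest directed path $v=p_0,p_1,\dots,p_k$ of length $k\le d$ to some target. Choose the target so that, on average over automorphisms $\varphi$, the image $\varphi(v)$ lies in $S$ while $\varphi(p_k)$ lies outside $S$. Because $\mathrm{Aut}(D)$ acts transitively, $\varphi(v)$ is uniformly distributed over $V(D)$, and so is each $\varphi(p_i)$. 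Hence
$$
\Pr\bigl[\varphi(v)\in S,\ \varphi(p_k)\notin S\bigr] \ge \frac{|S|}{n}\cdot\Bigl(1-\frac{|S|}{n}\Bigr)
$$
for a suitably averaged choice of target. Each such event forces some consecutive pair $\varphi(p_i)\in S$, $\varphi(p_{i+1})\notin S$, and then $\varphi(p_{i+1})\in N^+(S)$. A union bound over the at most $d$ indices gives
$$
d\cdot\frac{|N^+(S)|}{n} \gtrsim \frac{|S|}{2n},
$$
which is the claimed expansion.

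\textbf{From expansion to a cycle.} Run a directed DFS on $D$ and keep track of the current stack path $P$, the finished set $F$ and the unvisited set $U$. Throughout the search there are no arcs from $F$ to $U$, so $N^+(F)\subseteq P$. At the moment when $|F|$ first reaches roughly $n/3$ (or, symmetrically, when $|U|$ drops to roughly $n/3$), the expansion bound applied to $F$ or to $U$ produces a set $T\subseteq P$ of size at least about $n/(6d)$. This $T$ consists either of out-neighbours of $F$ lying on $P$, or of in-neighbours of $U$ lying on $P$.

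Strong connectivity (vertex-transitivity gives it) is then used to close a long cycle. The step I expect to be the real obstacle is this conversion: from many vertices of $P$ touched by arcs from $F$, to a single directed cycle of length comparable to $|T|$. The point of the DFS structure is that every vertex of $F$ has all its out-arcs into $F\cup P$. Following out-arcs inside $F$ therefore eventually leads to $P$, and since a vertex of $F$ was finished from a descendant position, one obtains back-arcs into $P$. Taking the earliest vertex $p$ of $T$ on $P$, I would find a directed path from the end of $P$ back to $p$ that goes through $F$. Together with the segment of $P$ from $p$ onwards, this yields a cycle containing at least $|T|$ vertices of $P$.

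\textbf{Constants.} The factor $9$ should come out of these steps: a factor $1/2$ or $1/3$ from expansion, and a factor $1/3$ from the thresholds chosen in the DFS. I would tune the thresholds so that the final bound reads $\ell\ge n/(9d)$. Alternatively, since the paper says the statement follows immediately from Lemmas 2.2 and 2.4 of~\cite{BHMSY}, the proof can simply quote those two lemmas: expansion $|N^+(S)|\ge |S|/(3d)$ for $|S|\le n/2$ (or the equivalent form stated there), and that lemma's conversion of such expansion into a cycle of length at least $n/(3\cdot 3d)$.
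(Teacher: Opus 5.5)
Citing Lemmas 2.2 and 2.4 of \cite{BHMSY}, as the paper does, is fine, and your expansion step is correct: averaging over targets gives $|N^+(S)|\ge |S|(n-|S|)/(nd)$ for every $S$. The gap is in your reconstruction of the cycle step, namely the inference from ``many vertices of $P$ receive arcs from $F$'' to ``a cycle through at least $|T|$ vertices of $P$''. In a DFS, a finished vertex $f$ hanging off $p_j$ (a descendant of $p_j$ finished before $p_{j+1}$ was discovered) has out-neighbours on $P$ only among $p_0,\dots,p_j$. An arc $f\to p_i$ therefore only closes a cycle through $p_i,\dots,p_j$ and the subtree below $p_j$, and $j$ may equal $i$. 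Nothing forces the end of $P$ to reach the earliest vertex of $T$ through $F$. Concretely, take the bidirected path $p_0\leftrightarrow p_1\leftrightarrow\cdots\leftrightarrow p_m$ with pendant $2$-cycles $p_j\leftrightarrow q_j$, and run DFS visiting $q_j$ before $p_{j+1}$. When $|F|$ first reaches about $n/3$, we have $F=\{q_0,\dots,q_r\}$ and $T=N^+(F)=\{p_0,\dots,p_r\}$, which has linear size, yet every directed cycle has length $2$. So a large $T$ only certifies a long path. Expansion must instead be applied to a set whose out-boundary is forced to be small when all cycles are short.

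Here is one way to supply that set. Let $p_0\to\cdots\to p_h$ be a directed path. Let $Y$ be any union of reachability sets $\mathrm{reach}(c)$, computed in $D-\{p_0,\dots,p_h\}$, over out-neighbours $c$ of $p_h$ off the path. Then $N^+(Y)\subseteq\{p_{h-\ell+2},\dots,p_h\}$, because an arc from $Y$ to $p_i$ closes the cycle $p_i\cdots p_h\, c\cdots p_i$ of length at least $h-i+2$. Start from any $p_0$, whose reach is all of $V$. Extend the path by an out-neighbour whose reach has size greater than $2n/3$ for as long as possible. When you stop, either some out-neighbour has reach of size in $[n/3,2n/3]$, or all have reach smaller than $n/3$. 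In the latter case, adding their reaches one at a time yields a union of size in $[n/3,2n/3)$. Either way you get $Y$ with $n/3\le |Y|\le 2n/3$ and $|N^+(Y)|\le \ell-1$. Expansion gives $|N^+(Y)|\ge 2n/(9d)$, so $\ell>2n/(9d)\ge n/(9d)$.
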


The key new ingredient of our proof is the following improved estimate. The argument in \cite{BHMSY} establishes the bound $\ell \ge d/\ell$, resulting in a quadratic dependence $\ell \ge \sqrt{d}$. In this paper, we improve this to a linear relation as follows.

\begin{lemma}\label{lem:diameter-bound}
We have
$$
\ell \ge \frac{d}{300}.
$$
\end{lemma}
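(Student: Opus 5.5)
The plan is to work in the weighted cycle graph $C(D)$. Its vertices are the directed cycles of $D$, two are adjacent when they share a vertex of $D$, and each vertex $F$ gets weight $w(F)=|F|\le \ell$. The weight of a path is the sum of its vertex weights, and $\rho(F,F')$ denotes the minimum weight of an $F$--$F'$ path. Let $\Delta=\max\rho$ be the weighted diameter. The proof has three steps.

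\textbf{Step 1: $\Delta \ge d$, up to an additive $O(\ell)$.} Take $u,v$ with $d(u,v)=d$. Since $D$ is strongly connected and vertex-transitive, every vertex lies on a directed cycle. Let $F_0\ni u$ and $F_k\ni v$, and let $F_0F_1\cdots F_k$ be a minimum-weight path in $C(D)$. Walk around $F_0$ to a common vertex with $F_1$, then around $F_1$ to a common vertex with $F_2$, and so on until reaching $v$ on $F_k$. This is a directed walk of length at most $\sum w(F_i)$, so $d\le \Delta$.

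\textbf{Step 2: shortcut lemma.} For every induced cycle $H$ in $C(D)$, construct (as in \cite{BHMSY}) a directed cycle $Z$ of $D$ meeting every cycle of $H$. Concretely, concatenate arcs of consecutive cycles of $H$ into a closed directed walk. Since $H$ is induced, nonconsecutive cycles are disjoint, and one extracts a directed cycle hitting all of them. Then $w(Z)\le\ell$ and $Z$ is adjacent to every vertex of $H$. Hence any two vertices $A,B$ of $H$ satisfy $\rho(A,B)\le w(A)+w(Z)+w(B)\le 3\ell$. Consequently every subpath of $H$ that is a weighted geodesic has weight at most $3\ell$.

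\textbf{Step 3: trapping lemma.} If $\Delta>300\ell$, produce an induced cycle containing a geodesic subpath of weight greater than $3\ell$, contradicting Step 2. This is the main obstacle.
\begin{itemize}
\item Automorphisms of $D$ act on $C(D)$ preserving weights. $C(D)$ is not transitive, but every vertex is within distance one of a cycle through any prescribed vertex of $D$, so we have near-transitivity up to an $O(\ell)$ error.
\item Among induced paths $P$ whose terminal segment $Q$ is a weighted geodesic of weight at least $c\Delta$, choose one of maximum total weight.
\item Take a geodesic of weight about $\Delta/2$ and apply an automorphism so that it starts near the last vertex of $Q$ and heads away from it. Such a direction exists because geodesics from a point to two far-apart targets cannot both stay close to $Q$.
\item If the shifted geodesic never meets or becomes adjacent to $P$ minus its last few vertices, appending it yields a heavier induced path, after shortcutting chords so it stays induced. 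Its terminal geodesic is heavy, contradicting maximality.
\item Otherwise, take the first vertex of the shifted geodesic adjacent to an earlier vertex of $P$. This closes an induced cycle containing a large chunk of $Q$ or of the new geodesic.
\end{itemize}

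The delicate accounting is that each rerouting (trimming at the junction, removing chord neighborhoods, the near-transitivity offset) discards only $O(1)$ vertices. In the unweighted argument those losses cost $O(1)$ length; here each discarded vertex can weigh up to $\ell$, so each loss costs $O(\ell)$. A geodesic cannot have two vertices at weighted distance greater than $3\ell$ that are adjacent to a common vertex, which bounds how much of the geodesic is lost near a chord. Summing these losses gives an explicit constant $C$ such that the trapped geodesic subpath has weight at least $c\Delta-C\ell$. Choosing constants so that this exceeds $3\ell$ whenever $\Delta>300\ell$ gives the contradiction, so $\Delta\le300\ell$. Combined with Step 1 this yields $\ell\ge d/300$, after absorbing additive constants, possibly by treating small $d$ trivially.
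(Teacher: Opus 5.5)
Your Steps~1 and~2 are exactly Lemmas~\ref{lem:delta-ge-d} and~\ref{lem:weight-le-3ell}. Step~1 in fact holds with no additive error, as your own argument shows, so the closing remark about absorbing constants is unnecessary. Step~3 has the same skeleton as Lemma~\ref{lem:weighted-near-transitive}.

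The gap is the loss accounting in Step~3, which is the one place where the weighted setting differs. You assert that each rerouting discards only $O(1)$ vertices and then multiply by $\ell$. But $Q$ and the shifted path are \emph{weighted} geodesics, not geodesics in the graph metric, so the unweighted reasoning that yields $O(1)$ vertices does not apply. A stretch of weight $O(\ell)$ can consist of $\Theta(\ell)$ short cycles. Then ``number of vertices $\times\,\ell$'' is an $O(\ell^2)$ loss, which is exactly the loss the weighting is meant to avoid. Likewise, ``geodesics to two far-apart targets cannot both stay close to $Q$'' is only a heuristic. What you actually need is that one of the two halves meets $Q\cup N(Q)$ only within \emph{weight} $O(\ell)$ of its starting point, including any later returns.

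The paper proves precisely this in Claim~\ref{clm:one-side-near-Q}, where $L',R'$ are the images of the two halves of a diametral geodesic $S$ split at a middle vertex, and $u'$ is equal or adjacent to $u$. It compares weights directly:
\begin{itemize}
\item Take $a'\in L'$ and $b'\in R'$ in $Q\cup N(Q)$, and set $A=w(S'[a',u'])$, $B=w(S'[u',b'])$.
\item Let $D_a,D_b$ be the $Q$-distances to $u$ of vertices $a,b\in Q$ equal or adjacent to $a',b'$.
\item Geodesicity of $S'$ gives $\dist_w(a',b')\ge A+B-W$, and the route through $Q$ gives $\dist_w(a',b')\le |D_a-D_b|+5W$.
\item Geodesicity of $Q$ and $S'$ gives $|A-D_a|,|B-D_b|\le 4W$.
\end{itemize}
Together these force $\min\{A,B\}\le 7W$. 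Hence trimming at the junction costs at most $20W$ on $Q$ and $11W$ on $L''$, however many vertices are removed.

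With $\beta=\Gamma/2-80W$, this traps $Q[y,z]$ of weight at least $\Gamma/2-100W$, which exceeds $3W$ once $\Gamma>300W$. The maximality step needs only $\Gamma/2>32W$. The constant $300$ is forced by this bookkeeping; you cannot simply choose it.

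Alternatively, your vertex-count accounting becomes legitimate if you run Step~3 with unit weights. The shortcut $Z$ of Step~2 shows that any unweighted geodesic subpath of an induced cycle has at most three vertices. Lemma~\ref{lem:weighted-near-transitive} with $w\equiv 1$ then shows that every geodesic in $C(D)$ has fewer than $300$ vertices. Your Step~1 walk then gives $d\le 299\ell$ directly.
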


Together with Lemma~\ref{lem:small-diameter-bound}, this immediately
implies Theorem~\ref{thm:sqrt-cycle} (see Section~\ref{proofsubsectionputtingeverythingtogether}). 
We make no attempt to optimize the constant factor $300$ in the bound of Lemma~\ref{lem:diameter-bound}.

\subsection{The weighted cycle graph and near-transitivity}

Following~\cite{BHMSY}, the \emph{cycle graph} $C(D)$ is the graph whose vertices are the directed cycles of $D$, with two vertices adjacent exactly when the corresponding directed cycles share a vertex in $D$.

In the rest of this section, let
$
G \coloneqq C(D).
$
For each vertex $F\in V(G)$, corresponding to a directed cycle $F$ of
$D$, define
$
w(F)=|F|.
$
We first record the notions of weighted distance and weighted diameter for a weighted graph.

\begin{definition}
Let $G$ be a graph equipped with a weight function
$
w:V(G)\to\mathbb Z_{>0}.
$
For a path
$
P=x_0x_1\cdots x_k
$
in $G$, define its \emph{weight} by
$
w(P) \coloneqq \sum_{i=0}^{k} w(x_i).
$
For vertices $x,y\in V(G)$, the \emph{weighted distance} $\dist_w(x,y)$ is the minimum value of $w(P)$ taken over all $x$--$y$ paths $P$ in $G$. A path attaining this minimum is called a \emph{weighted geodesic} between $x$ and $y$. Finally, the \emph{weighted diameter} of $G$ is
\[
\Gamma_w(G) \coloneqq \max_{x,y\in V(G)} \dist_w(x,y).
\]
\end{definition}

The next definition introduces the notions of weight-preserving automorphisms and $w$-near transitivity for weighted graphs.

\begin{definition}
Let $G$ be a graph equipped with a weight function
$
w:V(G)\to\mathbb Z_{>0}.
$
An automorphism $\phi$ of $G$ is called
\emph{weight-preserving} if
$
w(\phi(v))=w(v)
$
for every $v\in V(G)$. We say that $G$ is \emph{$w$-nearly transitive} if for every
$x,y\in V(G)$ there exists a weight-preserving automorphism
$\varphi$ of $G$ such that
$
\varphi(x)=y
\text{ or }
\varphi(x)\in N(y).
$
\end{definition}

Before analyzing weighted distances in the cycle graph, we first verify that it inherits the symmetries of the original digraph $D$. The following lemma extends an observation from \cite{BHMSY} by noting that automorphisms of $D$ inherently preserve cycle lengths, ensuring that the induced automorphisms on $C(D)$ are weight-preserving.

\begin{lemma}\label{lem:nearly-vertex-transitive}
The graph $G=C(D)$ is $w$-nearly transitive.
\end{lemma}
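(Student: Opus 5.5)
The plan is to lift automorphisms of $D$ to automorphisms of $G=C(D)$, and then use the vertex-transitivity of $D$ on a pair of cycles that share a vertex.

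\textbf{Lifting automorphisms.} Given an automorphism $\sigma$ of $D$, define $\hat\sigma:V(G)\to V(G)$ by $\hat\sigma(F)=\sigma(F)$. Here $\sigma(F)$ is the image of the directed cycle $F$, which is again a directed cycle of $D$ because $\sigma$ preserves arcs. The map $\hat\sigma$ is a bijection on the set of directed cycles, with inverse $\widehat{\sigma^{-1}}$. It preserves adjacency in both directions: two cycles $F,F'$ share a vertex $v$ if and only if $\sigma(F),\sigma(F')$ share $\sigma(v)$, since $\sigma$ is a bijection on $V(D)$. So $\hat\sigma$ is an automorphism of $G$. It is weight-preserving because $|\sigma(F)|=|F|$.

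\textbf{Near-transitivity.} Fix $x,y\in V(G)$, viewed as directed cycles $F_x,F_y$ of $D$. Choose vertices $u\in F_x$ and $v\in F_y$. By vertex-transitivity of $D$ there is an automorphism $\sigma$ of $D$ with $\sigma(u)=v$. Then $\hat\sigma(x)=\sigma(F_x)$ is a directed cycle containing $v$, so it shares the vertex $v$ with $F_y$. Hence either $\hat\sigma(x)=y$, or $\hat\sigma(x)$ is a distinct cycle meeting $F_y$, in which case $\hat\sigma(x)\in N(y)$. This is exactly the required condition.

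\textbf{Degenerate case.} There is essentially no obstacle; the only point needing care is that $G$ be nonempty, i.e.\ that $D$ has at least one directed cycle. This holds because $D$ is connected and vertex-transitive, hence strongly connected. So whenever $n\ge2$, every arc lies on a directed cycle. If $D$ has no arcs, the statement is vacuous since $V(G)$ is then empty.
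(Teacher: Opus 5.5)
Your proof is correct and follows essentially the same route as the paper. Both lift each automorphism of $D$ to a weight-preserving automorphism of $C(D)$, then use vertex-transitivity to map a vertex of one cycle to a vertex of the other, so the image cycle either equals or meets the target cycle. Your extra checks on bijectivity of the lift and your remarks on the degenerate case are harmless additions; the nonemptiness discussion is not even needed, since near-transitivity holds vacuously for an empty graph.
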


\begin{proof}
Any automorphism $\phi$ of $D$ induces a natural automorphism
$\phi'$ of $G=C(D)$. Namely, if $C\in V(G)$ is a directed
cycle of $D$, then $\phi'(C)$ is the image of $C$ under
$\phi$. Since automorphisms map directed cycles to directed
cycles and preserve intersections of cycles, $\phi'$ is indeed
an automorphism of $G$.

Moreover, automorphisms preserve cycle lengths, so for every
$C\in V(G)$ we have
$$
w(\phi'(C))
=
|\phi'(C)|
=
|C|
=
w(C).
$$
Hence $\phi'$ is weight-preserving. Now let $C_1,C_2\in V(G)$. Choose vertices
$
v\in C_1
\text{ and }
u\in C_2.
$
Since $D$ is vertex-transitive, there exists an automorphism
$\phi$ of $D$ such that
$
\phi(v)=u.
$ Then $\phi'(C_1)$ is a directed cycle containing $u$.
Since $u\in C_2$, it follows that
$
\phi'(C_1)\cap C_2\neq\varnothing.
$
Therefore $\phi'(C_1)$ is either equal to $C_2$ or adjacent to
$C_2$ in $G$. Since $\phi'$ is weight-preserving, this shows that $G$ is
$w$-nearly transitive, as desired.
\end{proof}

\subsection{Weighted distance in the cycle graph controls directed diameter}
The following lemma relates the directed diameter of $D$ to weighted distances in the cycle graph. In the unweighted framework of \cite{BHMSY}, converting paths in $C(D)$ to walks in $D$ incurs a multiplicative loss of $\ell$. By instead weighting each cycle by its length, a path's weight in $C(D)$ naturally upper-bounds the corresponding walk length in $D$. When combined with Lemmas~\ref{lem:weight-le-3ell} and~\ref{lem:weighted-near-transitive}, which establish the upper bound $\Gamma_w(G) \le 300 \ell$, it yields the desired linear diameter-circumference relation.

\begin{lemma}\label{lem:delta-ge-d}
We have
\[
    \Gamma_w(G)\geq d.
\]
\end{lemma}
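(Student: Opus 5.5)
To prove Lemma~\ref{lem:delta-ge-d}, we fix vertices $u,v\in V(D)$ with $d(u,v)=d$. We then build a directed $u$--$v$ walk in $D$ of length at most $\Gamma_w(G)$. Since a directed walk contains a directed path of no greater length, this gives $d\le \Gamma_w(G)$.

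The first step is to pick cycles at the endpoints. Because $D$ is vertex-transitive and connected, it is strongly connected. If $n\ge 2$, every vertex lies on a directed cycle: any arc $u\to u'$ can be closed by a directed path from $u'$ back to $u$. So we choose directed cycles $F_u\ni u$ and $F_v\ni v$. (If $n=1$, then $d=0$ and there is nothing to prove, assuming $G$ is nonempty or using the convention $\Gamma_w\ge 0$.) Let $P=F_0F_1\cdots F_k$ be a weighted geodesic in $G$ from $F_0=F_u$ to $F_k=F_v$. It exists because $G$ is connected: $D$ is strongly connected, so any two cycles are linked by a chain of cycles covering a directed path between them. By definition, $w(P)=\sum_{i=0}^k |F_i|\le \Gamma_w(G)$.

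The second step converts $P$ into a walk. For each $1\le i\le k$, consecutive cycles $F_{i-1}$ and $F_i$ are adjacent in $G$, so we can pick a vertex $x_i\in F_{i-1}\cap F_i$. Set $x_0=u$ and $x_{k+1}=v$. For each $0\le i\le k$, the vertices $x_i$ and $x_{i+1}$ both lie on the directed cycle $F_i$. Following $F_i$ from $x_i$ to $x_{i+1}$ gives a directed walk of length at most $|F_i|-1<|F_i|$. Concatenating these segments yields a directed $u$--$v$ walk of length at most $\sum_i |F_i| = w(P)\le \Gamma_w(G)$. Hence $d=d(u,v)\le \Gamma_w(G)$.

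No step here is a real obstacle. The only points needing care are that every vertex lies on some directed cycle and that $G$ is connected, so that the geodesic exists. Both follow at once from strong connectivity of vertex-transitive connected digraphs, as noted in the notation section.
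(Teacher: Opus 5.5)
Your proposal is correct and follows the paper's proof: fix a pair realizing the directed diameter, choose cycles through the two endpoints, take a weighted geodesic between them in $C(D)$, and follow each cycle to the next intersection vertex, so the walk length is at most the weight of the geodesic. Your justifications that every vertex lies on a directed cycle and that $G$ is connected (so the geodesic exists) spell out details the paper leaves implicit.
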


\begin{proof}
By the definition of the directed diameter, there exist vertices $x,y \in V(D)$ such that the shortest directed path from $x$ to $y$ in $D$ has length exactly $d$. Because $D$ is strongly connected, every vertex is contained in at least one directed cycle. Let $A \in V(G)$ be a vertex in the cycle graph $G = C(D)$ corresponding to a cycle containing $x$, and let $B \in V(G)$ be a vertex corresponding to a cycle containing $y$. Let $P = F_0 F_1 \cdots F_k$ be a weighted geodesic path in $G$ from $A$ to $B$, so that $F_0 = A$ and $F_k = B$. By definition, the weighted distance between $A$ and $B$ in $G$ is
$
\dist_w(A,B) = \sum_{i=0}^k |F_i|.
$

We construct a directed walk from $x$ to $y$ in the original digraph $D$ by traversing the cycles in $P$. Starting at $x \in V(F_0)$, follow the directed edges of $F_0$ until reaching a vertex $v_1 \in V(F_0) \cap V(F_1)$. The number of edges traversed in this initial step is at most $|F_0|$. From $v_1$, follow the directed edges of $F_1$ until reaching a vertex $v_2 \in V(F_1) \cap V(F_2)$, forming a directed path of at most $|F_1|$ edges. Iterating this process for each $1 \le i < k$, we travel from $v_i$ along $F_i$ to a vertex $v_{i+1} \in V(F_i) \cap V(F_{i+1})$, traversing at most $|F_i|$ edges. Finally, from $v_k \in V(F_{k-1}) \cap V(F_k)$, we follow the directed edges of $F_k$ until reaching $y \in V(F_k)$, taking at most $|F_k|$ edges.

Concatenating these directed path segments yields a directed walk in $D$ from $x$ to $y$. The total length of this directed walk is at most
$
\sum_{i=0}^k |F_i| = \dist_w(A,B).
$
Since the shortest directed path from $x$ to $y$ in $D$ has length $d$, the length of any directed walk from $x$ to $y$ must be at least $d$. Consequently,
$$
d \le \dist_w(A,B) \le \Gamma_w(G).
$$
This completes the proof of the lemma.
\end{proof}

\subsection{Induced cycles in the cycle graph have short weighted-geodesic subpaths}

The following lemma shows that every induced cycle in the cycle graph yields a directed cycle $Z$ in $D$ whose corresponding vertex in the cycle graph serves as a \emph{low-weight shortcut}. Consequently, every weighted-geodesic subpath of the induced cycle has weight at most $3\ell$. As discussed in the proof overview (Section~\ref{proofoverview}), this bound is crucial for establishing a linear relationship between the directed diameter and the circumference of $D$.

\begin{lemma}\label{lem:weight-le-3ell}
Every weighted-geodesic subpath of an induced cycle in $G=C(D)$ has weight at most $3\ell$.
\end{lemma}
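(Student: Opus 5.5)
The plan is in two steps. First, for an induced cycle $H = F_0F_1\cdots F_{k-1}F_0$ in $G$, I will build a directed cycle $Z$ in $D$ that meets every $F_i$. Second, I will use $Z$ as a shortcut vertex to bound the weight of any weighted geodesic lying on $H$.

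For the first step, the case $k=3$ is special: any subpath of $H$ that is a geodesic then has at most two vertices, so its weight is at most $2\ell$, and the case can be handled directly. For $k\ge 4$, choose for each $i$ a vertex $v_i\in V(F_i)\cap V(F_{i+1})$, with indices taken modulo $k$.
\begin{itemize}
\item Since $H$ is induced, $F_i$ meets only $F_{i-1}$ and $F_{i+1}$ among the cycles of $H$. Hence the $v_i$ are distinct: if $v_i=v_j$ for $i\ne j$, then $F_i,F_{i+1},F_j,F_{j+1}$ would share a vertex, forcing adjacencies that do not exist in $H$.
\item Let $W$ be the closed walk that starts at $v_{k-1}\in F_0$, follows $F_0$ to $v_0$, then follows $F_1$ to $v_1$, and so on, until it follows $F_{k-1}$ back to $v_{k-1}$. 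The segment $S_i$ of $W$ lies in $F_i$.
\item Two segments $S_i,S_j$ with $i,j$ non-consecutive lie in disjoint cycles, so they cannot meet. Consecutive segments $S_i,S_{i+1}$ may meet only in $F_i\cap F_{i+1}$.
\end{itemize}
To extract a genuine directed cycle, I will choose the $v_i$ so that shortcutting is harmless: let $v_i$ be the \emph{first} vertex of $F_{i+1}$ encountered when walking along $F_i$ from $v_{i-1}$. Then $S_i$ meets $F_{i+1}$ only at its endpoint $v_i$, so consecutive segments share only their junction vertex. The choice is cyclic, and the seed $v_{k-1}$ must be fixed first. I expect this to be the main obstacle: consistency around the cycle has to be checked, by iterating the choice around $H$ once or by noting that $F_0$ meets only $F_1$ and $F_{k-1}$. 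Once it is settled, $W$ is a simple directed cycle $Z$ meeting every $F_i$. Therefore $w(Z)=|Z|\le \ell$, and $Z$ is equal or adjacent to every $F_i$ in $G$. If $Z$ equals some $F_i$, the argument below still applies.

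For the second step, let $Q=F_aF_{a+1}\cdots F_b$ be a subpath of $H$ that is a weighted geodesic between its endpoints. The path $F_a\,Z\,F_b$ connects the same endpoints, so
\[
w(Q)=\dist_w(F_a,F_b)\le w(F_a)+w(Z)+w(F_b)\le 3\ell,
\]
using $w(F_a),w(F_b)\le \ell$ because every directed cycle of $D$ has length at most $\ell$. If $Z$ coincides with $F_a$ or $F_b$, the bound is only easier. This proves Lemma~\ref{lem:weight-le-3ell}.
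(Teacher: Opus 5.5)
Your strategy is the paper's: build a directed cycle $Z$ from arcs of the $F_i$ that meets every cycle of $H$, then use it as a shortcut vertex of weight at most $\ell$. Your $k=3$ case and your handling of $Z\in\{F_a,F_b\}$ are correct. The gap is the one step you flag and then defer, closing the walk into a simple cycle, and neither of your suggested remedies works.

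With a seed $v_{k-1}\in F_{k-1}\cap F_0$ and first-hit choices of $v_0,\dots,v_{k-2}$, every consecutive pair of segments meets only at its junction, except the pair $S_{k-1},S_0$. The arc $S_0$ of $F_0$ from $v_{k-1}$ to $v_0$ may contain further vertices of $F_{k-1}\cap F_0$. The segment $S_{k-1}$ may pass through one of them before reaching $v_{k-1}$, and then the closed walk repeats a vertex.

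Demanding the first-hit rule also at index $k-1$ means asking for a fixed point of the go-around map on $F_{k-1}\cap F_0$. Such a fixed point need not exist, and iterating the choice need not help. Already for $k=4$ one can place four common vertices of $F_3$ and $F_0$ so that this map swaps two seeds, each of which gives a self-intersecting walk. Your other remark, that $F_0$ meets only $F_1$ and $F_{k-1}$, is true but does not address the problem, because the collisions occur inside $F_{k-1}\cap F_0$ itself.

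The missing idea is to fix a seed \emph{pair} on one cycle rather than a single seed.
\begin{itemize}
\item The sets $F_{k-1}\cap F_0$ and $F_0\cap F_1$ are nonempty and disjoint, since $F_{k-1}\cap F_1=\varnothing$ for $k\ge 4$.
\item So walking around $F_0$ you can pick $v_{k-1}\in F_{k-1}\cap F_0$ and $v_0\in F_0\cap F_1$ such that the arc $S_0$ from $v_{k-1}$ to $v_0$ contains no other vertex of $F_{k-1}\cup F_1$.
\item Use first hits only for $v_1,\dots,v_{k-2}$. Let $S_{k-1}$ run along $F_{k-1}$ from $v_{k-2}$ to $v_{k-1}$, with no first-hit requirement.
\item Since $S_0\cap F_{k-1}=\{v_{k-1}\}$, the segment $S_{k-1}$ meets $S_0$ only at $v_{k-1}$. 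Your bullets handle all other pairs of segments.
\end{itemize}
This is exactly how the paper closes the cycle, with $F_1$ in the role of your $F_0$.

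Alternatively, keep an arbitrary seed and cut $S_{k-1}$ at its first vertex $p$ lying on $S_0$, starting $S_0$ at $p$ instead. Here $p\ne v_0$ because $F_{k-1}\cap F_1=\varnothing$, and the resulting simple cycle still meets every $F_i$. With either repair, the rest of your argument goes through unchanged.
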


\begin{proof}
Let $H=F_1F_2\cdots F_tF_1$ be an induced cycle in $G=C(D)$, and let $Q$ be a weighted-geodesic subpath of $H$. If $t\leq 3$, then $Q$ contains at most three vertices, each having a weight of at most $\ell$. Thus,
$
w(Q)\leq 3\ell.
$

We may therefore assume that $t\ge 4$. Since $H$ is an induced cycle, the non-consecutive cycles $F_2$ and $F_t$ are not adjacent in $G$, and hence are vertex disjoint. Consequently, $
(F_1\cap F_t)\cap(F_1\cap F_2)=\varnothing$. We may therefore choose vertices
$
v_1\in F_1\cap F_t$
and 
$v_2\in F_1\cap F_2
$
such that the directed path from $v_1$ to $v_2$ along $F_1$ contains no other vertices of $F_t$ or $F_2$.

For each $2\le i\le t-1$, suppose that $v_i\in F_i\cap F_{i-1}$ has already been chosen. Starting from $v_i$, follow the directed edges of $F_i$ until first reaching a vertex of $F_{i+1}$, and denote this vertex by $v_{i+1}$. Finally, from $v_t\in F_t\cap F_{t-1}$, follow the directed edges of $F_t$ until returning to $v_1$. Since $H$ is induced, every cycle $F_i$ intersects only the two neighboring cycles $F_{i-1}$ and $F_{i+1}$. Furthermore, the choice of $v_{i+1}$ as the first vertex of $F_{i+1}$ encountered along $F_i$ ensures that the corresponding subpath of $F_i$ is internally disjoint from $F_{i+1}$. Consequently, these subpaths intersect only at their endpoints, and their concatenation forms a simple directed cycle $Z$ in $D$.

By construction, the cycle $Z$ shares at least one vertex with every cycle $F_i$ in $H$. Because $\ell$ is the circumference of $D$, the length of $Z$ is bounded by $|Z|\leq \ell$. Furthermore, because $H$ is an induced cycle of length $t \geq 4$, $Z$ cannot be identical to any $F_i$; if it were, that $F_i$ would intersect non-adjacent cycles in $H$, directly contradicting the fact that $H$ is induced. Consequently, in the cycle graph $G=C(D)$, $Z$ represents a distinct vertex of weight at most $\ell$ that is adjacent to every $F_i$.

Now let $X$ and $Y$ be the endpoints of the subpath $Q$. Since $X,Y\in\{F_1,\ldots,F_t\}$, there exists a path in $G$ connecting $X$ and $Y$ through $Z$. The total weight of this path is at most
$
w(X)+w(Z)+w(Y)\le 3\ell.
$
Because $Q$ is a weighted geodesic from $X$ to $Y$, its weight cannot exceed the weight of this path. Therefore,
$
w(Q)\le 3\ell,
$
completing the proof of the lemma.
\end{proof}

\subsection{Large weighted diameter gives a long weighted geodesic on an induced cycle}

To deduce the upper bound $\Gamma_w(G) \le 300\ell$, we need to show that the cycle graph contains an induced cycle with a heavy weighted-geodesic subpath. Adapting an elegant argument from~\cite{BHMSY}, the following lemma produces such a cycle while retaining a substantial fraction of the weighted diameter. The main difficulty in the weighted setting is that a single overlapping vertex can have weight as large as $\ell$; the lemma overcomes this by tightly controlling the total weight lost in certain intersection regions. See Section~\ref{proofoverview:improvedlowerbound} for a sketch of the argument.

\begin{lemma}
\label{lem:weighted-near-transitive}
Let $G$ be a connected graph equipped with a positive integer weight function
$
    w:V(G)\to \mathbb{Z}_{>0}.
$
Let
$W=\max_{v\in V(G)} w(v)$
    and
   $\Gamma=\Gamma_w(G)$.
Suppose that $G$ is $w$-nearly transitive. If $\Gamma\geq 300W$, then $G$ contains an induced cycle $H$ with a subpath $Q_H$ such that $Q_H$ is a weighted geodesic and
$$
    w(Q_H)\geq \frac{\Gamma}{2}-100W.
$$
\end{lemma}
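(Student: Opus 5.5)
We follow the extremal strategy sketched in Section~\ref{proofoverview:improvedlowerbound}, working throughout with weights instead of edge counts. The basic tool is a weighted version of the usual subpath property. Every subpath of a weighted geodesic is itself a weighted geodesic. Moreover, since every weight lies in $[1,W]$, a weighted geodesic is an induced path: a chord would produce a strictly lighter path between the same endpoints.

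\textbf{Setting up the extremal object.} Call an induced path $P=x_0\cdots x_k$ \emph{admissible} if its terminal segment $Q=x_j\cdots x_k$ is a weighted geodesic with $w(Q)\ge \Gamma/2-cW$, for a suitable constant $c$ (something like $c=10$). Admissible paths exist: take a pair $x,y$ with $\dist_w(x,y)=\Gamma$ and a geodesic between them. Among all admissible paths, choose $P$ of maximum total weight $w(P)$.

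\textbf{Extending by a shifted geodesic.} Fix a geodesic $R=y_0y_1\cdots y_s$ with $w(R)=\Gamma$. Let $y_m$ be a vertex near its weighted midpoint, so that both halves have weight at least $\Gamma/2-W$. Apply $w$-near transitivity to map $y_m$ onto $x_k$ or onto a neighbour of $x_k$. Choose the orientation so that the half of $\varphi(R)$ leaving $\varphi(y_m)$ moves away from $x_j$. Both halves $\varphi(y_0\cdots y_m)$ and $\varphi(y_m\cdots y_s)$ are geodesics of weight at least $\Gamma/2-W$, since $\varphi$ is weight-preserving. For one of them, their concatenation with $Q$ must fail to continue the geodesic structure in a way that cannot be exploited; take the half $S$ whose vertices are at weighted distance at least roughly $\Gamma/2-O(W)$ from the start of $Q$ on the far side. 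Let $S'$ be the portion of $S$ before the first vertex $u$ that lies in $P\cup N(P)$ (excluding the attachment at $x_k$).

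If no such $u$ exists, then $P$ followed by $S$ is an induced path. Its final segment $S$ is a geodesic of weight at least $\Gamma/2-W$, so this path is admissible and strictly heavier than $P$, contradicting maximality.

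Hence $S$ returns to $P$ at some vertex $u$, adjacent to or equal to some $x_i$. The path $x_i\cdots x_k$ together with $S'$ and $u$ then closes into a cycle. We shortcut chords by taking $u$ first along $S$ and $x_i$ last along $P$, which yields an induced cycle $H$.

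\textbf{Keeping a heavy geodesic on $H$ (the main obstacle).} We must show that $H$ contains a geodesic subpath of weight at least $\Gamma/2-100W$. There are two cases.
\begin{itemize}
\item If $i\le j$, then $H$ contains almost all of $Q$. We lose only the few vertices near $x_k$ that were removed when resolving adjacencies between $S$ and the end of $Q$. Each removed vertex costs at most $W$, and the number of such vertices is bounded: because $S$ and $Q$ are geodesics, any $S$-vertex adjacent to a $Q$-vertex far from $x_k$ would give a shortcut contradicting the geodesic property. So $Q_H\subseteq Q$ has weight at least $w(Q)-O(W)$.
\item If $i>j$, then $S$ comes back to $Q$ itself. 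The triangle inequality in $\dist_w$ (with an additive error of $2W$ per adjacency step) together with the geodesicity of $S$ forces $S'$ to be heavy. Its prefix then serves as $Q_H$.
\end{itemize}

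The delicate part is the accounting: each "adjacent rather than equal" step and each removed boundary vertex costs up to $W$. I expect to bound the number of such events by a constant, giving a total loss of at most about $100W$. The hypothesis $\Gamma\ge300W$ guarantees that all the intermediate quantities are positive and that the segments being compared are long enough for the geodesic-shortcut arguments to apply.
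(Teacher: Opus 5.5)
There is a genuine gap, and it is at the step you yourself call the main obstacle. Everything depends on bounding where the shifted half $S$ can touch $Q\cup N(Q)$. Your justification is that an $S$-vertex adjacent to a $Q$-vertex far from $x_k$ would contradict geodesicity, and this is false.

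If $s\in S$ is equal or adjacent to $q\in Q$, geodesicity of $S$ and of $Q$ only gives $\bigl|w(S[\varphi(y_m),s])-w(Q[q,x_k])\bigr|\le 2W$. That is perfectly compatible with $S$ running back alongside $Q$: in a long cycle with unit weights, one half of $\varphi(R)$ runs along $Q$ backwards. Your instruction to ``choose the orientation so that the half moves away from $x_j$'' assumes what has to be proved.

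The missing idea is to use the fact that the \emph{whole} shifted geodesic $\varphi(R)$ passes through $\varphi(y_m)$. Suppose $a'$ on one half and $b'$ on the other half both lie in $Q\cup N(Q)$, at weighted distances $A,B$ from $\varphi(y_m)$ along $\varphi(R)$. Geodesicity of $\varphi(R)$ gives $\dist_w(a',b')\ge A+B-W$. Routing through $Q$ gives $\dist_w(a',b')\le |A-B|+O(W)$. Hence $\min\{A,B\}=O(W)$, so one of the two halves meets $Q\cup N(Q)$ only within weighted distance $O(W)$ of its start.

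Only then can you trim. Cut that half at its last contact $z'$ with $Q\cup N(Q)$. Cut $Q$ back to the vertex $z$ of $Q$ equal or adjacent to $z'$ that is farthest from $x_k$. This loses $O(W)$ \emph{weight} on each side. A bound on the number of removed vertices by a constant is not available, since weights may be as small as $1$.

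Without this, both of your cases fail.
\begin{itemize}
\item \emph{The ``no return'' case.} $P$ followed by $S$ is in general not induced, because vertices of $S$ near the attachment, including $\varphi(y_m)$ itself, may be adjacent to several vertices of $Q$. After the necessary trimming, you cannot show that the new path is heavier than $P$, or that its terminal geodesic still clears the admissibility threshold.
\item \emph{The case $i>j$.} Your assertion that $S'$ is forced to be heavy is unfounded. Geodesicity only forces $w(S')\approx w(Q[x_i,x_k])$. If $x_i$ lies near the middle of $Q$, your induced cycle $H$ consists of two arcs of weight about $w(Q[x_i,x_k])$. No geodesic subpath of $H$ then weighs much more than that, which can be far below $\Gamma/2-100W$.
\end{itemize}

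With the two-halves argument, the case $i>j$ disappears altogether, because the trimmed half avoids $Q\cup N(Q)$ entirely. The only possible return is then to $P[x_0,x_j]$. In that case $H$ contains $Q[x_j,z]$, of weight at least $w(Q)-O(W)$, which is how the paper concludes.
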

\begin{proof}

Choose vertices $v_1, v_2 \in V(G)$ such that
$
    \dist_w(v_1,v_2)=\Gamma,
$
and let $S$ be a weighted geodesic from $v_1$ to $v_2$. Every subpath of $S$ is inherently a weighted geodesic. Consequently, $S$ is an induced path, as any chord would replace a proper subpath with a single edge, bypassing internal vertices of positive weight and thereby shortening the path.

Choose a vertex $m$ on $S$ such that the two subpaths of $S$ from $m$ to its endpoints each have a weight of at least
$
    \Gamma / 2 - W.
$
Denote these subpaths by $L$ and $R$.

Let $\beta \coloneqq \Gamma/2 - 80W$. We will construct another induced path in $G$ as follows: Let $P$ be an induced path of maximum weight subject to the condition that it has a terminal subpath $Q$ (sharing exactly one endpoint with $P$) which is a weighted geodesic satisfying
\begin{equation}
\label{eq:boundweightQ}
w(Q) \ge \beta = \frac{\Gamma}{2} - 80W.
\end{equation}
The collection of induced paths satisfying the above condition is nonempty, since $S$ satisfies it with $Q=R$. Let $u$ be the common endpoint of $Q$ and $P$, let $x$ be the other endpoint of $P$, and let $y$ be the other endpoint of $Q$; see Figure~\ref{figure1}.

Because $G$ is $w$-nearly transitive, there exists a weight-preserving automorphism $\phi$ such that either $\phi(m) = u$ or $\phi(m) \in N(u)$. Let $L'\coloneqq \phi(L)$, $R' \coloneqq \phi(R)$, $S'\coloneqq \phi(S)$, and $u' \coloneqq \phi(m)$. 

\begin{figure}[htbp]
\centering
\begin{tikzpicture}[scale=0.85,>=stealth]

\tikzset{
    vtx/.style={circle,fill=black,inner sep=1.5pt},
    blueedge/.style={blue,thick},
    brace/.style={decorate,decoration={brace,amplitude=5pt}}
}

\coordinate (Xleft) at (-5.8,0);
\coordinate (Y)     at (-2.8,0);
\coordinate (A)     at (-2.0,0);
\coordinate (B)     at (-1.5,0);
\coordinate (U)     at ( 1.0,0);
\coordinate (Up)    at ( 2.0,0);

\coordinate (Top)   at ( 2.0, 2.6);
\coordinate (Ap)    at ( 2.0, 1.3);
\coordinate (Bp)    at ( 2.0,-1.3);
\coordinate (Bot)   at ( 2.0,-2.6);

\draw[thick] (Xleft) -- (Up);
\draw[thick] (Top) -- (Bot);

\node[vtx,label=below:$x$] at (Xleft) {};
\node[vtx,label=above:$y$] at (Y) {};
\node[vtx,label=above:$a$] at (A) {};
\node[vtx,label=above:$b$] at (B) {};
\node[vtx,label=above:$u$] at (U) {};
\node[vtx,label=below right:$u'$] at (Up) {};
\node[vtx,label=right:$a'$] at (Ap) {};
\node[vtx,label=right:$b'$] at (Bp) {};

\node[vtx] at (Top) {};
\node[vtx] at (Bot) {};

\draw[blueedge] (A) -- (Ap);
\draw[blueedge] (B) -- (Bp);
\draw[blueedge] (U) -- (Up);

\draw[brace] ($(U)+(0,-0.25)$) -- ($(Y)+(0,-0.25)$)
    node[midway,below=7pt] {$Q$};

\draw[brace] ($(U)+(0,-1.25)$) -- ($(Xleft)+(0,-1.25)$)
    node[midway,below=7pt] {$P$};
\draw[brace] ($(Top)+(0.75,0)$)--($(Up)+(0.75,0.1)$)
    node[midway,right=7pt] {$L'$};

\draw[brace] ($(Up)+(0.75,-0.1)$) -- ($(Bot)+(0.75,0)$)
    node[midway,right=7pt] {$R'$};

\draw[brace] ($(Top)+(1.65,0)$) -- ($(Bot)+(1.65,0)$)
    node[midway,right=8pt] {$S'$};

\node[align=left] at (-1.2,-3.2)
{
};

\end{tikzpicture}
\caption{Illustration of the paths $P, Q, L', R'$ and $S'$ used in the proof of Lemma~\ref{lem:weighted-near-transitive}. Blue segments highlight vertex pairs whose distance is either zero or one.}
\label{figure1}
\end{figure}

\begin{claim}\label{clm:one-side-near-Q}
Either every vertex of $L'\cap (Q\cup N(Q))$ lies at a weighted distance of at most $10W$ from $u'$ along $S'$, or every vertex of $R'\cap (Q\cup N(Q))$ lies at a weighted distance of at most $10W$ from $u'$ along $S'$.
\end{claim}
\begin{claimproof}
We first establish the following distance bound. Let
$a'\in L'\cap (Q\cup N(Q))$ and $b'\in R'\cap (Q\cup N(Q))$ be arbitrary, and write
$$
A \coloneqq w(S'[a',u'])
\qquad\text{and}\qquad
B \coloneqq w(S'[u',b']).
$$
Then,
$
\min\{A,B\}\le 7W.
$
Choose vertices $a,b\in V(Q)$ such that $a=a'$ or $aa'\in E(G)$, and $b=b'$ or $bb'\in E(G)$. Because $S'$ is a weighted geodesic and $a', b'$ lie on opposite sides of $u'$ in $S'$, we have:
\begin{equation}
\label{dista'b'lowerbound}
\dist_w(a',b') = w(S'[a',b']) = A+B-w(u') \ge A+B-W.
\end{equation}
Conversely, the path that traverses from $a'$ to $a$, moves along $Q$ from $a$ to $b$, and finally steps from $b$ to $b'$ provides an upper bound:
\begin{equation}
\label{dista'b'upperbound}
\dist_w(a',b') \le w(Q[a,b])+4W.
\end{equation}

Let $D_a=w(Q[a,u])$ and $D_b=w(Q[b,u])$ denote the weighted distances along $Q$ from $a$ and $b$, respectively, to the endpoint $u$. Because $a$ and $b$ both lie on $Q$, the subpath between them has weight at most:
\begin{equation}
\label{boundQab}
w(Q[a,b])\le |D_a-D_b|+W. 
\end{equation}

Combining \eqref{dista'b'lowerbound}, \eqref{dista'b'upperbound}, and \eqref{boundQab}, we obtain
\begin{equation}
\label{dista'b'upperbounduseful}
A+B-W \le |D_a-D_b|+5W. 
\end{equation}

Furthermore, since $a=a'$ or $aa'\in E(G)$, and $u=u'$ or $uu'\in E(G)$, the weighted geodesicity of both $Q$ and $S'$ implies
$
|A-D_a|\le w(aa')+w(u'u),
$
where $w(xy)=0$ if $x=y$. As each edge has weight at most $2W$, it follows that $|A-D_a|\le 4W$. Similarly,
$
|B-D_b|\le 4W.
$
Combining these two inequalities with \eqref{dista'b'upperbounduseful}, we obtain
$$
A+B-W
\le |D_a-D_b|+5W
\le |D_a-A|+|A-B|+|B-D_b|+5W
\le |A-B|+13W.
$$
Hence,
$
A+B\le |A-B|+14W.
$
Since
$
A+B-|A-B|=2\min\{A,B\},
$
it follows that
\begin{equation}
\min\{A,B\}\le 7W.
\label{eq:pairwise-bound}
\end{equation}

We are now ready to prove the claim. If every vertex of $L'\cap(Q\cup N(Q))$ is at a weighted distance of at most $10W$ from $u'$ along $S'$, then the first conclusion of the claim holds. Otherwise, choose a vertex $a'\in L'\cap(Q\cup N(Q))$ such that $A = w(S'[a',u'])>10W$. Then, for any $b'\in R'\cap(Q\cup N(Q))$, the bound in \eqref{eq:pairwise-bound} implies that $$B = w(S'[u',b'])\le 7W\le 10W.$$ Since $b'$ was arbitrary, each vertex in $R'\cap(Q\cup N(Q))$ is at a weighted distance of at most $10W$ from $u'$ along $S'$. Hence, either $L'$ or $R'$ satisfies the claim.
\end{claimproof}

By Claim~\ref{clm:one-side-near-Q} and the symmetry between $L'$ and $R'$, we may assume without loss of generality that all vertices in $L'\cap(Q\cup N(Q))$ lie at a weighted distance of at most $10W$ from $u'$ along $S'$. Let $z'$ be the vertex in $L'\cap(Q\cup N(Q))$ that achieves the maximum weighted distance $w(L'[u',z'])$ from $u'$ along $L'$; see Figure~\ref{figure2}. Consequently,
\begin{equation}
\label{eq:boundwLu'z'}
w(L'[u',z'])\leq 10W.   
\end{equation}
Let $z \in V(Q)$ be a vertex that is either equal to or adjacent to $z'$, chosen to maximize the weighted distance $w(Q[z,u])$ from $u$ along $Q$. Since $z'$ lies at a weighted distance of at most $10W$ from $u'$, and $u'$ is either equal to or adjacent to $u$, we bound the weight of the subpath $Q[z,u]$ by the weight of the path traversing through $z'$ and $u'$. More precisely, adopting the convention that $w(xy)=0$ if $x=y$, the weighted geodesicity of $Q$ implies:
\begin{equation}
\label{eq:boundweightQzu}
    w(Q[z,u]) \leq w(zz') + w(L'[z', u']) + w(u'u) \overset{\eqref{eq:boundwLu'z'}}{\leq} 20W.  
\end{equation}

Let $Q'\coloneqq Q[y,z]$. Then,
\begin{equation}
    w(Q')\geq w(Q)- w(Q[z,u]) \overset{\eqref{eq:boundweightQzu}}{\ge} w(Q)- 20W \overset{\eqref{eq:boundweightQ}}{\geq} \frac{\Gamma}{2}-100W.
   \label{eq:Q-prime-large}
\end{equation}

Recall that $u'$ is an endpoint of $L'$. Let $v'$ denote the other endpoint of $L'$, and set $L'' \coloneqq L'[z',v']$ (see Figure~\ref{figure2}). Since $w(L) \ge \Gamma/2 - W$ and $\phi$ is a weight-preserving automorphism, we have $w(L') \ge \Gamma/2 - W$. Combining this with the upper bound $w(L'[u',z']) \le 10W$ from \eqref{eq:boundwLu'z'}, and noting that $w(L'') \ge w(L') - w(L'[u',z'])$, we obtain:
\begin{equation}
    w(L'') \ge \frac{\Gamma}{2} - 11W > \beta.
\label{eq:L-double-prime-large}
\end{equation}

\begin{claim}\label{claim:L''P_meets}
   We have $L'' \cap (P[x,y] \cup N(P[x,y])) \ne \emptyset$.
\end{claim}
\begin{claimproof}
Suppose, for contradiction, that $L'' \cap (P[x,y] \cup N(P[x,y])) = \emptyset$. We construct a path $T$ from $x$ to $v'$ by concatenating the paths $P[x,y]$, $Q[y,z]$, and $L''$, including  $zz'$ if $z \neq z'$. Since $z=z'$ or $zz' \in E(G)$, this concatenation forms a valid path $T$ in $G$ from $x$ to $v'$. 

We claim that $T$ is an induced path in $G$. Indeed, $P[x,y]$ is induced by definition, and $L''$ is induced because it is a subpath of the weighted geodesic $S'$. Moreover, by our assumption, $L'' \cap (P[x,y] \cup N(P[x,y])) = \emptyset$. It thus suffices to verify that $(L'' \setminus \{z'\}) \cap (Q' \cup N(Q')) = \emptyset$ and that $z'$ is neither equal to nor adjacent to any vertex of $Q'$ other than $z$. The former follows by the maximal choice of $z'$, while the latter follows by the maximal choice of $z$. Thus, $T$ is an induced path in $G$, as desired.

We now demonstrate that $w(T) > w(P)$. Note that $T$ can be obtained from $P$ by replacing the subpath $Q[z,u] \setminus \{z\}$ (which has weight at most $20W$ by \eqref{eq:boundweightQzu}) with $L'' \setminus \{z\}$ (which has weight at least $w(L'') - W$). (Observe that this includes both possibilities: $z = z'$ or $zz' \in E(G)$.) 
Utilizing \eqref{eq:L-double-prime-large}, we find:
\begin{equation}
\label{eq:wTtoP}
    w(T) \geq w(P) - 20W + w(L'') - W \overset{\eqref{eq:L-double-prime-large}}{\geq} w(P) - 21W + \left(\frac{\Gamma}{2} - 11W\right).
\end{equation}

Given the assumption $\Gamma \ge 300W$, the right-hand side of \eqref{eq:wTtoP} is strictly greater than $w(P)$, demonstrating that $w(T) > w(P)$. Furthermore, $L''$ is a terminal subpath of $T$ that constitutes a weighted geodesic satisfying $w(L'') > \beta$ by \eqref{eq:L-double-prime-large}. As $T$ is an induced path of greater weight than $P$, its existence contradicts the maximality of $P$, thereby proving the claim.
\end{claimproof}

\begin{figure}[htbp]
\centering
\begin{tikzpicture}[scale=0.8,>=stealth]

\tikzset{
    vtx/.style={circle,fill=black,inner sep=1.5pt},
    blueedge/.style={blue,thick},
    brownedge/.style={brown!80!orange,thick},
    brace/.style={decorate,decoration={brace,amplitude=5pt}}
}

\coordinate (X)  at (0,0);
\coordinate (S)  at (2.3,0);
\coordinate (Y)  at (4.8,0);
\coordinate (Z)  at (7.2,0);
\coordinate (U)  at (9.0,0);
\coordinate (Up) at (9.5,0);

\coordinate (Zp) at (9.5,1.1);
\coordinate (T)  at (9.5,3.3);
\coordinate (Vp) at (9.5,5.2);

\draw[thick] (X) -- (Z);
\draw[brownedge] (Z) -- node[below=4pt] {$\leq 20W$} (U);
\draw[blueedge] (U) -- (Up);

\draw[brownedge] (Up) -- node[right=1pt] {$\leq 10W$} (Zp);
\draw[thick] (Zp) -- (Vp);

\node[vtx,label=below left:$x$] at (X) {};
\node[vtx,label=below:$s$] at (S) {};
\node[vtx,label=below:$y$] at (Y) {};
\node[vtx,label=below:$z$] at (Z) {};
\node[vtx,label=below:$u$] at (U) {};
\node[vtx,label=right:$u'$] at (Up) {};

\node[vtx,label=right:$z'$] at (Zp) {};
\node[vtx,label=right:$t$] at (T) {};
\node[vtx,label=right:$v'$] at (Vp) {};

\draw[blueedge]
    (S) -- (T);

\draw[blueedge]
    (Z) --(Zp);

\draw[brace]
    ($(Y)+(0,0.25)$) -- ($(Z)+(0,0.25)$)
    node[midway,above=7pt] {$Q'$};

\draw[brace]
    ($(U)+(0,-0.75)$) -- ($(Y)+(0,-0.75)$)
    node[midway,below=7pt] {$Q$};

\draw[brace]
    ($(U)+(0,-1.6)$) -- ($(X)+(0,-1.6)$)
    node[midway,below=7pt] {$P$};

\draw[brace]
    ($(Vp)+(1.75,0)$) -- ($(Up)+(1.75,0)$)
    node[midway,right=8pt] {$L'$};

\draw[brace]
    ($(Vp)+(0.75,0)$) -- ($(Zp)+(0.75,0)$)
    node[midway,right=8pt] {$L''$};

\end{tikzpicture}
\caption{Construction of the induced cycle $H$ in the proof of Lemma~\ref{lem:weighted-near-transitive}. Blue segments again represent vertex pairs whose distance is either zero or one.}
\label{figure2}
\end{figure}

By Claim~\ref{claim:L''P_meets}, there exist vertices $s\in V(P[x,y])$ and $t\in V(L'')$ such that either $s = t$ or $st \in E(G)$. Choose a pair $(s,t)$ that minimizes the combined weight:
$
    w(P[s,y])+w(L''[z',t]).
$
Recall that either $z=z'$ or $zz' \in E(G)$, and either $s=t$ or $st \in E(G)$. Let $H$ be the subgraph of $G$ defined by the union of the paths $P[s,y]$, $Q[y,z]$, and $L''[z',t]$, along with the edges $zz'$ (if $z \neq z'$) and $st$ (if $s \neq t$). The concatenation of these segments forms a closed walk in $G$.

We claim that $H$ is an induced cycle in $G$. First, because $P$ is an induced path, its subpath $P[s,z]$ is also induced. Similarly, because $L''$ is an induced path (being a subpath of the weighted geodesic $S'$), $L''[z',t]$ is also induced. Furthermore, there are no edges or shared vertices between $Q[y,z]$ and $L''[z',t]$ other than the vertex $z$ (if $z=z'$) or the edge $zz'$. Indeed, this follows because $(L'' \setminus \{z'\}) \cap (Q \cup N(Q)) = \emptyset$ by the maximal choice of $z'$, and because $z'$ is neither equal nor adjacent to any vertex of $Q[y,z]$ other than $z$ by the maximal choice of $z$. Finally, there can be no edges between $P[s,y]$ and $L''[z',t]$ other than $st$ (if $s \neq t$), nor can they share any vertices unless $s=t$. Any such edge or shared vertex would yield a valid pair $(s', t')$ with a strictly smaller combined weight $w(P[s',y]) + w(L''[z',t'])$, directly contradicting the minimal choice of $(s,t)$. Thus, $H$ is an induced cycle, as desired.

It remains to show that $H$ contains a subpath $Q_H$ that is a weighted geodesic satisfying $w(Q_H) \ge \frac{\Gamma}{2}-100W$. We claim that $Q'$ is the desired subpath. Indeed, $Q'$ is a weighted geodesic because it is a subpath of the weighted geodesic $Q$. Moreover, \eqref{eq:Q-prime-large} guarantees that $w(Q') \ge \frac{\Gamma}{2}-100W$, as required. This completes the proof of the lemma.
\end{proof}

\subsection{Putting everything together}
\label{proofsubsectionputtingeverythingtogether}
We are now ready to combine the results of the previous subsections and complete the proof of Theorem~\ref{thm:sqrt-cycle}.

\begin{proof}[Proof of Theorem~\ref{thm:sqrt-cycle}]

Recall that \(G=C(D)\) and that
\(w(F)=|F|\) for each \(F\in V(G)\). Since every vertex of \(G\)
corresponds to a directed cycle in \(D\), we have
$
    W=\max_{F\in V(G)} w(F)=\ell.
$

We begin by showing that
$
    \Gamma_w(G)\le 300\ell.
$
Suppose, for a contradiction, that
$
    \Gamma_w(G)>300\ell.
$
Since Lemma~\ref{lem:nearly-vertex-transitive} guarantees that $G$ is $w$-nearly transitive, Lemma~\ref{lem:weighted-near-transitive} yields an induced cycle $H$ containing a weighted-geodesic subpath $Q_H$ such that
$
    w(Q_H)
    \ge \frac{\Gamma_w(G)}{2}-100\ell
    > 50\ell.
$
On the other hand, Lemma~\ref{lem:weight-le-3ell} gives
$
    w(Q_H)\le 3\ell,
$
a contradiction. Hence, $\Gamma_w(G)\le 300\ell$.

Furthermore, Lemma~\ref{lem:delta-ge-d} implies that $d\le \Gamma_w(G)$.
Combining this with $\Gamma_w(G)\le 300\ell$, we obtain
$
    \ell\ge d/300.
$
Together with Lemma~\ref{lem:small-diameter-bound}, this immediately
implies Theorem~\ref{thm:sqrt-cycle}. Indeed,
$$
\ell
\ge
\max\left\{\frac{n}{9d},\frac{d}{300}\right\}
\ge
\sqrt{\frac{n}{9d}\cdot\frac{d}{300}}
=
\frac{\sqrt n}{30\sqrt 3},
$$
as desired.
\end{proof}

\section{Linear perimeter gap in vertex-transitive digraphs}
\label{sec:perimetergap}

In this section, we prove Theorem~\ref{thm:perimeter-gap}, confirming a conjecture of Buci\'c, Hendrey, Mohar, Steiner and Yepremyan~\cite{BHMSY}. Our strategy is as follows. For any positive integer $m$, we construct a strongly connected, vertex-transitive digraph $D$ on $12m$ vertices such that every directed cycle in $D$ has length divisible by $m$, but $D$ has no Hamiltonian cycle. After formally defining $D$, we prove that it is strongly connected (Lemma~\ref{lem:D_strongly_connected}) and vertex-transitive (Lemma~\ref{lem:D_vertex_trans}). A parity obstruction established in Lemma~\ref{lem:no_di_cycle} rules out a Hamiltonian cycle, and we then verify the divisibility condition, completing the proof of Theorem~\ref{thm:perimeter-gap}. See Section~\ref{proofoverview:improvedupperbound} for an overview of this construction and its parity obstruction.

\subsection{Construction of the digraph \texorpdfstring{$D$}{D}}
\begin{definition}\label{def:directed_D}
Let
$
X=\{(a,b):a,b\in [4],\ a\neq b\}.
$
We define a digraph $D$ with vertex set
$
V(D)=X\times \mathbb{Z}_m.
$
Its directed edges are given by
$$
E(D)=\left\{\big((a,b,t),(b,c,t+1)\big): a,b,c\in [4],\ a\neq b,\ c\notin \{a,b\},\ t\in \mathbb{Z}_m\right\},
$$
where the third coordinate is taken modulo $m$.
\end{definition}

\subsection{Connectivity and vertex-transitivity}

\begin{lemma}\label{lem:D_strongly_connected}
The digraph $D$ is strongly connected.
\end{lemma}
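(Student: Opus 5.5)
We will show that from any vertex $(a,b,t)$ there is a directed walk to any other vertex $(a',b',t')$. The layer coordinate increases by exactly one along every arc. It therefore suffices to show that the "pair part" can be steered arbitrarily, and then to pad walks so their lengths are congruent to $t'-t$ modulo $m$.

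First, the underlying dynamics on $X$. Consider the auxiliary digraph $D_X$ on $X$ with arcs $(a,b)\to(b,c)$ for $c\notin\{a,b\}$. Every vertex has out-degree $2$ in $D_X$. Two walks will do the work.
\begin{itemize}
\item From $(a,b)$ we can reach $(b,c)$ for either $c$ different from $a,b$.
\item From $(a,b)$ we can go to $(b,c)$ and then to $(c,a)$, since $a\notin\{b,c\}$.
\end{itemize}
Iterating these moves, we show that $D_X$ is strongly connected. Concretely, from $(a,b)$ we reach $(b,c)$, $(c,a)$, $(a,b)$, giving a closed walk of length $3$. Using the other choice of $c$ gives further pairs. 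From $(a,b)$ with $\{c,d\}=[4]\setminus\{a,b\}$, the vertices reachable in one step are $(b,c)$ and $(b,d)$. A short case check shows that every ordered pair is reached, since any pair $(p,q)$ can be hit after at most three steps. For example, $(b,a)$ is reached via $(a,b)\to(b,c)\to(c,d)\to(d,\cdot)$, and similar chains handle the rest. By the symmetry of $S_4$ acting on $X$, which preserves $D_X$, it suffices to do this check from the single vertex $(1,2)$.

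Next, we handle walk lengths. Since $D_X$ is strongly connected, we need closed walks whose lengths have gcd $1$. Closed walks of length $3$ exist, as shown above. We also want one of length $4$: $(1,2)\to(2,3)\to(3,4)\to(4,1)\to(1,2)$ is valid, since each new entry avoids the previous two. Since $\gcd(3,4)=1$, every sufficiently large integer $N$ is the length of some closed walk at any vertex of $D_X$, using strong connectivity to move to the cycles and back. Hence, for any pairs $x,x'\in X$ and any residue $r$ modulo $m$, there is a walk in $D_X$ from $x$ to $x'$ whose length is congruent to $r$ modulo $m$. To get it, take a fixed walk from $x$ to $x'$ and splice in a closed walk of suitable large length. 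Lifting this walk to $D$ by attaching layer indices gives a directed walk from $(x,t)$ to $(x',t')$.

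The only real work is the finite case check that $D_X$ is strongly connected, and it is routine. Alternatively, one could skip the residue step: in a connected digraph whose vertices all have equal in- and out-degree, connectivity of the underlying graph already implies strong connectivity, so it would suffice to show the underlying graph of $D$ is connected. The walk argument above handles this directly, though.
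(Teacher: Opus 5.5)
Your argument follows the paper's proof step by step:
\begin{itemize}
\item pass to the auxiliary digraph on $X$ with arcs $(a,b)\to(b,c)$;
\item use the $S_4$-action to reduce strong connectivity to reachability from $(1,2)$;
\item use closed walks of lengths $3$ and $4$ to realize every sufficiently large length, and splice one in to fix the length modulo $m$;
\item lift to $D$ by attaching layer indices.
\end{itemize}
The padding and lifting steps are fine as written.

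The step that is wrong is the finite check you call routine. It is the only place where the specific structure of $X$ matters. Your claim that every pair is reached from $(a,b)$ within at most three steps is false. From $(1,2)$, the vertices reachable in at most three steps are exactly the eleven elements of $X$ other than $(2,1)$. By the $S_4$-symmetry, the reversed pair $(b,a)$ is therefore at distance exactly four from $(a,b)$.

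Your illustrating chain $(a,b)\to(b,c)\to(c,d)\to(d,\cdot)$ has only three arcs and ends at $(d,a)$ or $(d,b)$, not at $(b,a)$. The fix is to take $(d,b)$ and add the fourth arc $(d,b)\to(b,a)$, which is allowed since $a\notin\{d,b\}$. This is exactly the paper's chain $(1,2)\to(2,3)\to(3,4)\to(4,2)\to(2,1)$. With this correction, and with the reachable sets actually listed (the paper records them after one, two, three and four steps), your proof is complete.
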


\begin{proof}
Let $B$ be the auxiliary digraph with vertex set $X$ and with an arc
$
(a,b)\to (b,c)
$
whenever $c\notin \{a,b\}$. We first show that $B$ is strongly connected.

The natural action of $S_4$ on $X$ is transitive and preserves the arcs of $B$. Hence, to prove that $B$ is strongly connected, it suffices to show that every vertex of $B$ is reachable from $(1,2)$. Indeed, if $u,v\in X$ and $g\in S_4$ sends $(1,2)$ to $u$, then a directed walk from $(1,2)$ to $g^{-1}(v)$ is sent by $g$ to a directed walk from $u$ to $v$.

We now verify that every vertex of $B$ is reachable from $(1,2)$. In one step, we can reach
$
(2,3),\ (2,4).
$
In two steps, we can reach
$
(3,1),\ (3,4),\ (4,1),\ (4,3).
$
In three steps, we can reach
$
(1,3),\ (1,4),\ (3,2),\ (4,2).
$
Finally,
$
(1,2)\to (2,3)\to (3,4)\to (4,2)\to (2,1)
$
shows that $(2,1)$ is reachable as well. This accounts for all $11$ other vertices in $X$, confirming that $B$ is strongly connected.

Now let
$
(a,b,t), (x,y,s)
$
be arbitrary vertices of $D$. Since $B$ is strongly connected, there is a directed walk in $B$ from $(a,b)$ to $(x,y)$; say it has length $L$. Write this walk as
$
(a_0,a_1)\to (a_1,a_2)\to \cdots \to (a_L,a_{L+1}),
$
where $(a_0,a_1)=(a,b)$ and $(a_L,a_{L+1})=(x,y)$. By the definition of the edges of $D$, for each $i=0,\ldots,L-1$ and $t \in \Z_m$ we have
$
(a_i,a_{i+1},t+i)\to (a_{i+1},a_{i+2},t+i+1),
$
where the third coordinate is taken modulo $m$. Therefore the above walk in $B$ gives a directed walk in $D$ from $(a,b,t)$ to
$
(x,y,t+L).
$

It remains to adjust the length of the walk modulo $m$. We claim that, at every vertex $(a,b)\in X$, the digraph $B$ contains closed directed walks of lengths $3$ and $4$. Indeed, if $c\in [4]\setminus \{a,b\}$, then
$
(a,b)\to (b,c)\to (c,a)\to (a,b)
$
is a closed directed walk of length $3$. Also, if $c,d$ are the two elements of $[4]\setminus \{a,b\}$, then
$
(a,b)\to (b,c)\to (c,d)\to (d,a)\to (a,b)
$
is a closed directed walk of length $4$.

Choose a sufficiently large integer $N$ such that $N \equiv s-t-L \pmod m$. Since every sufficiently large integer is a nonnegative integer combination of $3$ and $4$, we may write
$
N=3r+4q
$
for some nonnegative integers $r,q$. By inserting, at the beginning of the walk from $(a,b)$ to $(x,y)$, $r$ closed directed walks of length $3$ and $q$ closed directed walks of length $4$, all based at $(a,b)$, we obtain a directed walk in $B$ from $(a,b)$ to $(x,y)$ of length $N+L$.

Writing this new walk as
$
(u_0,u_1)\to (u_1,u_2)\to \cdots \to (u_{N+L},u_{N+L+1}),
$
with $(u_0,u_1)=(a,b)$ and $(u_{N+L},u_{N+L+1})=(x,y)$, the definition of $D$ gives a directed walk
$
(u_0,u_1,t)\to (u_1,u_2,t+1)\to \cdots \to (u_{N+L},u_{N+L+1},t+N+L).
$
Since
$
t+N+L\equiv s \pmod m,
$
this is a directed walk in $D$ from $(a,b,t)$ to $(x,y,s)$. Hence every vertex of $D$ can reach every other vertex, so $D$ is strongly connected.
\end{proof}

\begin{lemma}
\label{lem:D_vertex_trans}
The digraph $D$ is vertex-transitive.
\end{lemma}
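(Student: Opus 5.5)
The plan is to exhibit two families of automorphisms of $D$ and show that together they act transitively on $V(D)=X\times\mathbb Z_m$. The first family moves the layers. For each $s\in\mathbb Z_m$ define the shift $\tau_s(a,b,t)=(a,b,t+s)$. The definition of $E(D)$ puts no constraint on $t$ other than increasing it by one along each arc. Hence $\big((a,b,t),(b,c,t+1)\big)\in E(D)$ if and only if $\big((a,b,t+s),(b,c,t+s+1)\big)\in E(D)$. So $\tau_s$ is a bijection of $V(D)$ that maps arcs to arcs, and its inverse $\tau_{-s}$ does the same; thus $\tau_s$ is an automorphism.

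The second family acts within layers. For $g\in S_4$ define $\sigma_g(a,b,t)=(g(a),g(b),t)$. Since $g$ is injective, $a\neq b$ implies $g(a)\neq g(b)$, so $\sigma_g$ maps $V(D)$ to itself, and $\sigma_{g^{-1}}$ is its inverse. For an arc $\big((a,b,t),(b,c,t+1)\big)$ with $c\notin\{a,b\}$, the image is $\big((g(a),g(b),t),(g(b),g(c),t+1)\big)$. Injectivity of $g$ again gives $g(c)\notin\{g(a),g(b)\}$, so the image is an arc. The same reasoning applied to $g^{-1}$ shows that non-arcs map to non-arcs. This is exactly the observation already made for the auxiliary digraph $B$ in the proof of Lemma~\ref{lem:D_strongly_connected}, now carried out layerwise.

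To finish, take arbitrary vertices $(a,b,t)$ and $(x,y,s)$. The action of $S_4$ on ordered pairs of distinct elements is transitive, so some $g\in S_4$ satisfies $g(a)=x$ and $g(b)=y$. For instance, extend $a\mapsto x$, $b\mapsto y$ by any bijection $[4]\setminus\{a,b\}\to[4]\setminus\{x,y\}$. Then $\tau_{s-t}\circ\sigma_g$ is an automorphism sending $(a,b,t)$ to $(x,y,s)$, which proves the lemma.

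There is no real obstacle here. The only point needing care is checking that $\sigma_g$ preserves the condition $c\notin\{a,b\}$ in both directions, and injectivity of $g$ handles this directly.
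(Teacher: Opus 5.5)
Your proposal is correct and takes essentially the same approach as the paper, which lets $S_4\times\mathbb{Z}_m$ act by $(\sigma,r)\cdot(a,b,t)=(\sigma(a),\sigma(b),t+r)$ and checks that this action is transitive and preserves arcs. Your composite $\tau_{s-t}\circ\sigma_g$ is exactly the group element $(g,s-t)$; you also spell out bijectivity and that non-arcs go to non-arcs, which the paper leaves implicit.
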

\begin{proof}
Note that the group $S_4\times \mathbb{Z}_m$ acts on $V(D)$ by
$
(\sigma,r)\cdot (a,b,t)=(\sigma(a),\sigma(b),t+r).
$
This action is transitive on $V(D)$ and preserves directed adjacency. Indeed, if
$
(a,b,t)\to (b,c,t+1)
$
is an edge of $D$, then $c\notin \{a,b\}$, and hence
$
\sigma(c)\notin \{\sigma(a),\sigma(b)\}.
$
Therefore
$
(\sigma(a),\sigma(b),t+r)\to (\sigma(b),\sigma(c),t+r+1)
$
is also an edge of $D$. Thus $D$ is vertex-transitive, as desired.
\end{proof}

\subsection{The parity obstruction for Hamiltonicity}
\label{subsec:parityobstruction}

\begin{lemma}\label{lem:no_di_cycle}
The digraph $D$ has no Hamiltonian cycle.
\end{lemma}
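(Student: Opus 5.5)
The plan is to extract from a hypothetical Hamiltonian cycle $C$ a family of layer-to-layer bijections of $X$, show each one is an even permutation, and compare their composition with the first-return map on a fixed layer. That first-return map must be a $12$-cycle, which is odd.

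\textbf{Step 1: the local bijections.} Suppose $C$ is a directed Hamiltonian cycle of $D$. Every arc of $D$ goes from layer $t$ to layer $t+1$. Since $C$ leaves each vertex exactly once and enters each vertex exactly once, its arcs from layer $t$ to layer $t+1$ define a bijection $f_t\colon X\to X$. Concretely, $f_t(a,b)=(b',c)$ whenever $C$ contains the arc $(a,b,t)\to(b',c,t+1)$.

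\textbf{Step 2: each $f_t$ is even.} By Definition~\ref{def:directed_D}, every arc has the form $(a,b,t)\to(b,c,t+1)$ with $c\notin\{a,b\}$, so $f_t(a,b)=(b,c)$. Fix $b$. The three vertices $(a,b)$ with $a\neq b$ are mapped into the three vertices $(b,c)$ with $c\neq b$. Because $f_t$ is a bijection and these fibres partition $X$ on both sides, this restriction is itself a bijection. Hence there is a permutation $\sigma_b$ of $[4]\setminus\{b\}$ with $f_t(a,b)=(b,\sigma_b(a))$, and $\sigma_b$ has no fixed point because $c\neq a$. A fixed-point-free permutation of a $3$-element set is a $3$-cycle, so each $\sigma_b$ is even.

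Now factor $f_t=h\circ g$, where:
\begin{itemize}
\item $g(a,b)=(b,a)$ is a product of $6$ disjoint transpositions, hence even;
\item $h(b,a)=(b,\sigma_b(a))$ fixes the first coordinate and acts as the $3$-cycle $\sigma_b$ on each of the four fibres $\{(b,\cdot)\}$, so it is a product of four $3$-cycles, hence even.
\end{itemize}
Therefore $f_t$ is even.

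\textbf{Step 3: the first-return map.} Identify layer $0$ with $X$. Starting at $(x,0)$ and following $C$ for $m$ steps returns to layer $0$ for the first time, at the vertex $\pi(x)$, where
$$
\pi=f_{m-1}\circ\cdots\circ f_0 .
$$
By Step 2, $\pi$ is even. On the other hand, $C$ is a single cycle that visits the $12$ vertices of layer $0$ in succession, once every $m$ steps. So the orbit of any $x$ under $\pi$ is all of $X$, meaning $\pi$ is a $12$-cycle. A cycle of even length is odd, which is a contradiction, and so $D$ has no Hamiltonian cycle.

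The only step requiring real care is Step 2: one must check that $f_t$ really decomposes fibrewise into derangements of $3$-element sets, and then choose a factorization through the swap map $g$ that makes the parity evident. Steps 1 and 3 are routine bookkeeping.
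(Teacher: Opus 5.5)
Your proof is correct and follows the paper's argument: you decompose each layer bijection fibrewise into fixed-point-free permutations of $3$-element sets (hence $3$-cycles), factor it through the coordinate swap, and derive a parity contradiction between the even first-return map and a $12$-cycle. The one difference is the order of the factorization: you apply the swap first ($h\circ g$), while the paper applies it last ($\pi=R\circ F$), which is immaterial.
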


\begin{proof}
Suppose, for contradiction, that $D$ has a Hamiltonian cycle $C$. For each $t\in \mathbb{Z}_m$, let
$
X_t=X\times \{t\}.
$
Since every directed edge of $D$ goes from $X_t$ to $X_{t+1}$, the Hamiltonian cycle $C$ induces a bijection
$
\pi_t:X_t\to X_{t+1}.
$
Identifying each layer $X_t$ with $X$, we regard $\pi_t$ as a permutation of $X$. By the definition of $D$, this permutation has the form
$
\pi_t(a,b)=(b,c)
$
for some $c\notin \{a,b\}$. We now prove that each $\pi_t$ is an even permutation of the $12$-element set $X$. Fix $t$, and write $\pi=\pi_t$. For each $b\in [4]$, define
$
T_b=\{(a,b):a\in [4]\setminus \{b\}\}
$
and
$
H_b=\{(b,c):c\in [4]\setminus \{b\}\}.
$

Since every directed edge originating from a vertex $(a,b)$ must terminate at a vertex of the form $(b,c)$, the image of $T_b$ under $\pi$ is contained in $H_b$; that is, $\pi(T_b) \subseteq H_b$. Because the global map $\pi$ is injective and the finite sets $T_b$ and $H_b$ have the same cardinality ($|T_b|=|H_b|=3$), the restriction $\pi|_{T_b}$ is therefore a bijection from $T_b$ to $H_b$.
Therefore, for each $b\in [4]$, there is a permutation
$
f_b:[4]\setminus \{b\}\to [4]\setminus \{b\}
$
such that
$
\pi(a,b)=(b,f_b(a)).
$
The condition $f_b(a)\notin \{a,b\}$ implies, in particular, that
$
f_b(a)\neq a
$
for every $a\in [4]\setminus \{b\}$. Hence $f_b$ is a fixed-point-free permutation of a $3$-element set, and therefore $f_b$ is a $3$-cycle. In particular, $f_b$ is even.

Define two permutations $F,R:X\to X$ by
$
F(a,b)=(f_b(a),b)
$
and
$
R(a,b)=(b,a).
$
Then
$
\pi=R\circ F.
$
The permutation $F$ is the product of four disjoint $3$-cycles, one on each set $T_b$, and hence $F$ is even. The permutation $R$ is the product of the six disjoint transpositions
$
(a,b)\leftrightarrow (b,a),
$
one for each $2$-element subset $\{a,b\}\subseteq [4]$, and hence $R$ is also even. Therefore $\pi=R\circ F$ is even. Thus $\pi_t$ is even for every $t\in \mathbb{Z}_m$.

Now consider the first-return map on the layer $X_0$ obtained by following the Hamiltonian cycle for $m$ steps:
$
\Pi=\pi_{m-1}\circ \cdots \circ \pi_0.
$
Since each $\pi_t$ is even, the permutation $\Pi$ is even.

On the other hand, since $C$ is a directed cycle containing all $12m$ vertices of $D$, the first-return map $\Pi$ must itself be a single cycle on the $12$ vertices of $X_0$. Indeed, if $\Pi$ had an orbit of length strictly smaller than $12$, then following $C$ from any vertex in that orbit would return to the starting vertex after fewer than $12m$ steps, contradicting the assumption that $C$ is a Hamiltonian cycle of length $12m$. Thus $\Pi$ is a $12$-cycle. Since a $k$-cycle has sign $(-1)^{k-1}$, every $12$-cycle is an odd permutation. This contradicts the fact that $\Pi$ is even. Therefore $D$ has no Hamiltonian cycle.
\end{proof}

\subsection{Proof of Theorem~\ref{thm:perimeter-gap}}

We are now ready to prove Theorem~\ref{thm:perimeter-gap} by combining the results established in the previous subsections.

\begin{proof}
For each positive integer $m$, let $D$ be the digraph on $12m$ vertices defined in Definition~\ref{def:directed_D}. By Lemma~\ref{lem:D_strongly_connected}, $D$ is strongly connected, and hence connected.
Moreover, by Lemma~\ref{lem:D_vertex_trans}, $D$ is vertex-transitive. Finally, every directed edge of $D$ increases the third coordinate by $1$ modulo $m$. Hence, if $C$ is a directed cycle of length $\ell$, then after traversing $C$ the third coordinate has increased by $\ell$ modulo $m$. Since the cycle returns to its starting vertex, we must have
$
\ell\equiv 0 \pmod m.
$
Thus every directed cycle in $D$ has length divisible by $m$.

By Lemma~\ref{lem:no_di_cycle}, $D$ has no directed cycle of length $12m$. Since all directed cycle lengths are divisible by $m$, the longest directed cycle in $D$ has length at most $11m$. Therefore the perimeter gap of $D$ is at least
$
12m-11m=m.
$
Since $n=12m$, this is equal to
$
\frac{n}{12}.
$
As this construction works for every positive integer $m$, the theorem follows for infinitely many values of $n$, namely for all multiples of $12$.
\end{proof}

\section*{Acknowledgements}
We are grateful to Matija Buci\'c and Shoham Letzter for helpful discussions on the Lov\'asz conjecture. 

OpenAI Codex (GPT-5.6 Sol) was used to assist with proofreading this paper. All mathematical ideas and arguments are entirely due to the authors, who take full responsibility for the content.

\printbibliography

\end{document}